\date{Janvier 2011}
\title{Sparse quadratic forms and their geometric applications}
\author{Assaf NAOR}
\address{New York University\\
Courant Institute\\
251 Mercer street\\
New York, NY 10012  --  USA}
\email{naor@cims.nyu.edu}
\thanks{Supported in part by NSF grant CCF-0635078, BSF
grant 2006009, and the Packard Foundation.}
\renewcommand{\le}{\leqslant}
\renewcommand{\ge}{\geqslant}
\renewcommand{\leq}{\leqslant}
\renewcommand{\setminus}{\smallsetminus}
\renewcommand{\1}{\mathbf{1}}
\newcommand{\e}{\varepsilon}
\newcommand{\R}{\mathbb{R}}
\newcommand{\N}{\mathbb{N}}
\newcommand{\supp}{\mathrm{supp}}
\newcommand{\trace}{\mathrm{tr}}
\newcommand{\HS}{\mathrm{HS}}
\newcommand{\Ker}{\mathrm{Ker}}
\newcommand{\tr}{\mathrm{tr}}
\begin{document}
\maketitle
%\tableofcontents
%\noindent{\bf INTRODUCTION}

\section{Introduction}

In what follows all matrices are assumed to have real entries, and square matrices are always assumed to be symmetric unless stated otherwise. The support of a $k\times n$ matrix $A=(a_{ij})$ will be denoted below by
$$
\supp(A)=\big\{(i,j)\in \{1,\ldots,k\}\times \{1,\ldots,n\}:\ a_{ij}\neq 0\big\}.
$$
If $A$ is an $n\times n$  matrix, we denote the decreasing rearrangement of its eigenvalues by
$$\lambda_1(A)\ge \lambda_2(A)\ge \cdots\ge \lambda_n(A).
$$
$\R^n$ will always be assumed to be equipped with the standard scalar product $\langle\cdot,\cdot\rangle$. Given a vector $v\in \R^n$ and $i\in \{1,\ldots,n\}$, we denote by $v_i$ the $i$th coordinate of $v$. Thus for $u,v\in \R^n$ we have $\langle u,v\rangle =\sum_{i=1}^n u_iv_i$.

Our goal here is to describe the following theorem of Batson, Spielman and Srivastava~\cite{BSS}, and to explain some of its recently discovered geometric applications. We expect that there exist many more applications of this fundamental fact in matrix theory.
 \begin{theo}\label{thm:BSS}
For every $\e\in (0,1)$ there exists $c(\e)=O(1/\e^2)$ with the following properties.  Let $G=(g_{ij})$ be an $n\times n$ matrix with nonnegative entries. Then there exists an $n\times n$ matrix $H=(h_{ij})$ with nonnegative entries that satisfies the following conditions:
\begin{enumerate}
\item $\supp(H)\subseteq \supp(G)$.
\item The cardinality of the support of $H$ satisfies $|\supp(H)|\le c(\e) n$.
\item For every $x\in \R^n$ we have
\begin{equation}\label{eq:sparsification}
\sum_{i=1}^n\sum_{j=1}^n g_{ij}(x_i-x_j)^2\le \sum_{i=1}^n\sum_{j=1}^n h_{ij}(x_i-x_j)^2\le (1+\e)\sum_{i=1}^n\sum_{j=1}^n g_{ij}(x_i-x_j)^2.
\end{equation}
\end{enumerate}
 \end{theo}
 The second assertion of Theorem~\ref{thm:BSS} is that the matrix $H$ is {\em sparse}, yet due to the third assertion of Theorem~\ref{thm:BSS} the quadratic form $\sum_{i=1}^n\sum_{j=1}^n h_{ij}(x_i-x_j)^2$ is nevertheless a good approximation of the quadratic form $\sum_{i=1}^n\sum_{j=1}^n g_{ij}(x_i-x_j)^2$. For this reason Theorem~\ref{thm:BSS} is called in the literature a {\em sparsification theorem}.

 The bound on $|\supp(H)|$ obtained in~\cite{BSS} is
\begin{equation}\label{eq:c(eps)}
|\supp(H)|\le 2\left\lceil\frac{(\sqrt{1+\e}+1)^4}{\e^2}n\right\rceil.
\end{equation}
%\frac{8+8\sqrt{1+\e}+8\e+4\e\sqrt{1+\e}+\e^2}{\e^2}=\frac{16}{\e^2}+O\left(\frac{1}{\e}\right)
Thus $c(\e)\le 32/\e^2+O(1/\e)$. There is no reason to expect that~\eqref{eq:c(eps)} is best possible, but a simple argument~\cite[Section~4]{BSS} shows that necessarily $c(\e)\ge 8/\e^2$.

%but in Section~\ref{sec:sharp}  it is shown that necessarily $c(\e)\ge 8/\e^2$.

\subsection{Historical discussion}

%\bigskip

%\noindent{\bf Historical discussion.}
The sparsification problem that is solved (up to constant factors) by Theorem~\ref{thm:BSS} has been studied for some time in the theoretical computer science literature. The motivations for these investigations were algorithmic, and therefore there was emphasis on constructing the matrix $H$ quickly. We will focus here on geometric applications of Theorem~\ref{thm:BSS} for which the existential statement suffices, but we do wish to state that~\cite{BSS} shows that $H$ can be constructed in time $O(n^3|\supp(G)|/\e^2)=O(n^5/\e^2)$. For certain algorithmic applications this running time is too slow, and the literature contains works that yield weaker asymptotic bounds on $|\supp(H)|$ but have a faster construction time. While such tradeoffs are important variants of Theorem~\ref{thm:BSS}, they are not directly relevant to our discussion and we will not explain them here. For the applications described below, even a weaker bound of, say, $|\supp(H)|\le c(\e)n\log n$ is insufficient.

Bencz\'ur and Karger~\cite{BK} were the first to study the sparsification problem. They proved the existence of a matrix $H$ with $|\supp(H)|\le c(\e)n\log n$, that satisfies the conclusion~\eqref{eq:sparsification} only for {\em Boolean} vectors $x\in \{0,1\}^n$. In their series of works on fast solvers for certain linear systems~\cite{ST1,ST2,ST3,ST4}, Spielman and Teng studied the sparsification problem as stated in Theorem~\ref{thm:BSS}, i.e., with the conclusion~\eqref{eq:sparsification} holding for {\em every} $x\in \R^n$. Specifically, in~\cite{ST4}, Spielman and Teng proved Theorem~\ref{thm:BSS} with the weaker estimate $|\supp(H)|= O\left(n(\log n)^7/\e^2\right)$.  Spielman and Srivastava~\cite{SS1} improved this estimate on the size of the support of $H$ to $|\supp(H)|=O(n(\log n)/\e^2)$. As we stated above, Theorem~\ref{thm:BSS}, which answers positively a conjecture of Spielman-Srivastava~\cite{SS1}, is due to Batson-Spielman-Srivastava~\cite{BSS}, who proved this sharp result via a new deterministic iterative technique (unlike the previous probabilistic arguments) that we will describe below. This beautiful new approach does not only yield an asymptotically sharp bound on $|\supp(H)|$: it gives for the first time a deterministic algorithm for constructing $H$ (unlike the previous randomized algorithms), and it also gives additional results that will be described later. We refer to Srivastava's dissertation~\cite{Sr2} for a very nice and more complete exposition of these ideas. See also the work of Kolla-Makarychev-Saberi-Teng~\cite{KMST} for  additional results along these lines.

\subsection{Combinatorial interpretation}

Suppose that $G$ is the adjacency matrix of the complete graph, i.e., the diagonal entries of $G$ vanish and $g_{ij}=1$ if $i\neq j$. Assume also that the matrix $H$ of Theorem~\ref{thm:BSS} happens to be a multiple of the adjacency matrix of a $d$-regular graph $\Gamma=(\{1,\ldots,n\},E)$, i.e., for some $\gamma>0$ and all $i,j\in \{1,\ldots,n\}$ we have $h_{ij}=\gamma$ if $\{i,j\}\in E$ and $h_{ij}=0$ otherwise. Thus $|\supp(H)|=dn$. By expanding the squares in~\eqref{eq:sparsification} and some straightforward linear algebra, we see that~\eqref{eq:sparsification} is equivalent to the bound $(\lambda_1(H)-\lambda_n(H))/(\lambda_1(H)-\lambda_2(H))\le 1+\e$. Thus if $\e$ is small then the graph $\Gamma$ is a good expander (see~\cite{HLW} for background on this topic). The Alon-Boppana bound~\cite{Ni} implies that $H$ satisfies
$(\lambda_1(H)-\lambda_n(H))/(\lambda_1(H)-\lambda_2(H))\ge 1+4(1-o(1))\sqrt{d}$ as $n,d\to \infty$. This lower bound can be asymptotically attained since if $\Gamma$ is a Ramanujan graph of Lubotzky-Phillips-Sarnak~\cite{LPS} then $\lambda_1(H)/\gamma,\lambda_n(H)/\gamma\in \left[-2\sqrt{d-1},2\sqrt{d-1}\right]$. Writing $1+\e=\left(d+2\sqrt{d-1}\right)/\left(d-2\sqrt{d-1}\right)=1+4(1+o(1))/\sqrt{d}$, we see that the existence of Ramanujan graphs means that (in this special case of the complete graph) there exists a matrix $H$ satisfying~\eqref{eq:sparsification} with $|\supp(H)|=dn=16n(1+o(1))/\e^2$. The bound on $|\supp(H)|$ in~\eqref{eq:c(eps)} shows that Thereom~\ref{thm:BSS} achieves the optimal Ramanujan bound up to a factor of $2$. For this reason Batson-Spielman-Srivastava call the matrices produced by Theorem~\ref{thm:BSS} ``twice-Ramanujan sparsifiers".
Of course, this analogy is incomplete since while the matrix $H$ is sparse, it need not be a multiple of the adjacency matrix of a graph, but rather an adjacency matrix of a weighted graph. Moreover, this graph has bounded average degree, rather than being a regular graph of bounded degree. Such weighted sparse (though non-regular) graphs  still have useful pseudorandom properties (see~\cite[Lemma~4.1]{BSS}). Theorem~\ref{thm:BSS} can be therefore viewed as a new deterministic construction of ``expander-like" weighted graphs, with very good spectral gap. Moreover, it extends the notion of expander graphs since one can start with an arbitrary matrix $G$ before applying the sparsification procedure, with the quality of the resulting expander (measured in terms of absolute spectral gap) being essentially the same as the quality of $G$ as an expander.

\subsection{ Structure of this paper.} In Section~\ref{sec:strong} we state a stronger theorem (Theorem~\ref{thm:psd version}) of Batson-Spielman-Srivastava~\cite{BSS}, and prove that it implies Theorem~\ref{thm:BSS}. Section~\ref{sec:proof} contains the Batson-Spielman-Srivastava proof of this theorem, which is based on a highly original iterative argument. Section~\ref{sec: john} contains an application of Theorem~\ref{thm:psd version}, due to Srivastava~\cite{Sr1}, to approximate John decompositions. In section~\ref{sec:dim} we describe two applications of Theorem~\ref{thm:psd version}, due to Newman-Rabinovich~\cite{NR} and Schechtman~\cite{Sc3}, to dimensionality reduction problems. Section~\ref{sec:BT} describes the work of Spielman-Srivastava~\cite{SS2} that shows how their proof technique for Theorem~\ref{thm:psd version} can be used to prove a sharper version of the Bourgain-Tzafriri restricted invertibility principle. Section~\ref{sec:other} contains concluding comments and some open problems.

\section{A stronger theorem}\label{sec:strong}

Batson-Spielman-Srivastava actually proved a stronger theorem that implies Theorem~\ref{thm:BSS}. The statement below is not identical to the statement in~\cite{BSS}, though it easily follows from it. This formulation is stated explicitly as Theorem 1.6 in Srivastava's dissertation~\cite{Sr2}.

\begin{theo}\label{thm:psd version} Fix $\e\in (0,1)$ and $m,n\in \N$.
For every $x_1,\ldots,x_m\in \R^n$ there exist $s_1,\ldots,s_m\in [0,\infty)$ such that
\begin{equation}\label{eq:card s}
\left|\big\{i\in \{1,\ldots,m\}: s_i\neq 0\big\}\right|\le \left\lceil \frac{n}{\e^2}\right\rceil,
\end{equation}
and for all $y\in \R^n$ we have
\begin{equation}\label{eq:sparse tenson}
(1-\e)^2\sum_{i=1}^m \langle x_i,y\rangle^2\le \sum_{i=1}^m s_i\langle x_i,y\rangle^2\le (1+\e)^2 \sum_{i=1}^m \langle x_i,y\rangle^2.
\end{equation}
\end{theo}

\subsection{Deduction of Theorem~\ref{thm:BSS} from Theorem~\ref{thm:psd version}} Let $G=(g_{ij})$ be an $n\times n$ matrix with nonnegative entries. Note that the diagonal entries of $G$ play no role in the conclusion of Theorem~\ref{thm:BSS}, so we may assume in what follows that $g_{ii}=0$ for all $i\in \{1,\ldots,n\}$.

%We shall say that the matrix $G$ is connected if the combinatorial graph whose vertices are $\{1,\ldots,n\}$ and whose edges are the pairs in $\supp(G)\subseteq %\{1,\ldots,n\}^2$ is connected. By considering each connected component separately, for the purpose of proving Theorem~\ref{thm:BSS} we may assume that $G$ is connected.

The degree matrix associated to $G$ is defined as usual by
\begin{equation}\label{eq:def diagonal matrix}D_G=
\begin{pmatrix} \sum_{j=1}^n g_{1j} & 0 & \dots& \dots&0 \\
  0 & \sum_{j=1}^n g_{2j}& \ddots& \ddots & \vdots\\
  \vdots & \ddots & \sum_{j=1}^n g_{3j}  & \ddots & \vdots\\
            \vdots & \ddots & \ddots& \ddots &0\\
              0 & \dots & \dots &0&\sum_{j=1}^n g_{nj}
                       \end{pmatrix},\end{equation}
and the Laplacian associated to $G$ is defined by
\begin{equation}\label{eq:def laplacian}
\Delta_G=D_G-G=\frac12\sum_{i=1}^n\sum_{j=1}^n g_{ij} (e_i-e_j)\otimes (e_i-e_j),
\end{equation}
where $e_1,\ldots,e_n\in \R^n$ is the standard basis of $\R^n$. In the last equation in~\eqref{eq:def laplacian}, and in what follows, we use standard tensor notation: for $x,y\in \R^n$ the linear operator $x\otimes y:\R^n\to \R^n$ is given by $(x\otimes y)(z)=\langle x,z\rangle y$.

Theorem~\ref{thm:psd version}, applied to the vectors $\{\sqrt{g_{ij}}\left(e_i-e_j\right):\ i,j\in \{1,\ldots,n\}\ \wedge \ i<j\}\subseteq \R^n$, implies that there exist $\{s_{ij}:\ i,j\in \{1,\ldots,n\}\ \wedge \ i<j\}\subseteq [0,\infty)$, at most $\left\lceil n/\e^2\right\rceil$ of which are nonzero, such that for every $y\in \R^n$ we have
\begin{equation}\label{eq:sparsified}
\left\langle\Delta_Gy,y\right\rangle\le \sum_{i=1}^{n-1}\sum_{j=i+1}^n s_{ij}g_{ij}\left\langle e_i-e_j,y\right\rangle^2\le \left(\frac{1+\e}{1-\e}\right)^2 \left\langle\Delta_Gy,y\right\rangle.
\end{equation}
Extend $(s_{ij})_{i<j}$ to a symmetric matrix by setting $s_{ii}=0$ and $s_{ji}=s_{ij}$ if $i>j$, and define $H=(h_{ij})$ by $h_{ij}=s_{ij}g_{ij}$. Then $\supp(H)\subseteq \supp(G)$ and $|\supp(H)|\le 2 \left\lceil n/\e^2\right\rceil$. A straightforward computation shows that $\left\langle\Delta_Gy,y\right\rangle=\frac12\sum_{i=1}^n\sum_{j=1}^ng_{ij}(y_i-y_j)^2$ and $\sum_{i=1}^{n-1}\sum_{j=i+1}^n s_{ij}g_{ij}\left\langle e_i-e_j,y\right\rangle^2=\frac12\sum_{i=1}^n\sum_{j=1}^nh_{ij}(y_i-y_j)^2$. Thus, due to~\eqref{eq:sparsified} Theorem~\ref{thm:BSS} follows, with the bound on $|\supp(H)|$ as in~\eqref{eq:c(eps)}.\qed

%We shall now describe another useful way to write the Laplacian $\Delta_G$. Let $B_G$ be the following $n^2\times n$ matrix.

%For every $i,j\in \{1,\ldots, n\}$ with $i<j$ define a vector $x^{i,j}\in \R^n$ as follows:
%$$
%x^{(i,j)}=\sqrt{g_{ij}}\left(e_i-e_j\right).
%$$

%$$D(\lambda_{1},...,\lambda_{r})=
%\begin{pmatrix} 2\sum_{j=1}^n a_{1j} & -2a_{12} & \dots& \dots&-2a_{1n} \\
%  -2a_{12} & 2\sum_{j=1}^n a_{2j}& \ddots& \ddots & \vdots\\
%  \vdots & \ddots & \pi_{F}^{\lambda_{3}}  & \ddots & \vdots\\
%            \vdots & \ddots & \ddots& \ddots &0\\
%              0 & \dots & \dots &0&\pi_{F}^{\lambda_{r}}
%                       \end{pmatrix}\in G.$$

\section{Proof of Theorem 2.1}\label{sec:proof}

Write $A=\sum_{i=1}^m x_i\otimes x_i$. Note that  it suffices to prove Theorem~\ref{thm:psd version} when $A$ is the $n\times n$ identity matrix $I$. Indeed, by applying an arbitrarily small perturbation we may assume that $A$ is invertible. If we then set $y_i=A^{-1/2}x_i$ then $\sum_{i=1}^m y_i\otimes y_i=I$, and the conclusion of Theorem~\ref{thm:psd version} for the vectors $\{y_1,\ldots,y_m\}$ implies the corresponding conclusion for the original vectors $\{x_1,\ldots,x_m\}$.

The situation is therefore as follows. We are given $x_1,\ldots,x_n\in \R^n$ satisfying
\begin{equation}\label{eq:decompositon of identity}
\sum_{i=1}^m x_i\otimes x_i=I.
\end{equation}
Our goal is to find $\{s_i\}_{i=1}^m\subseteq [0,\infty)$ such that at most $\lceil n/\e^2\rceil$ of them are nonzero, and
\begin{equation}\label{eq:spectral goal}
 \frac{\lambda_1\left(\sum_{i=1}^n s_ix_i\otimes x_i\right)}{\lambda_n\left(\sum_{i=1}^n s_ix_i\otimes x_i\right)}\le \left(\frac{1+\e}{1-\e}\right)^2.
\end{equation}

For the ensuing argument it will be convenient to introduce the following notation:
\begin{equation}\label{eq:def theta}
\theta=\frac{1+\e}{1-\e}.
\end{equation}

The proof constructs by induction $\{t_k\}_{k=1}^\infty\subseteq [0,\infty)$ and $\{y_k\}_{k=1}^\infty\subseteq \{x_1,\ldots,x_m\}$ with the following properties. Setting $A_0=0$ and $A_i=\sum_{j=1}^i t_j y_j\otimes y_j$ for $i\in \N$, the following inequalities hold true:
\begin{equation}\label{eq:spectal inductive}
-\frac{n}{\e}+i< \lambda_n(A_i)\le \lambda_1(A_i)<\theta\left(\frac{n}{\e}+i\right),
\end{equation}
and for every $i\in \N$ we have
\begin{equation}\label{eq:upper inductive}
\sum_{j=1}^n \frac{1}{\theta\left(\frac{n}{\e}+i\right)-\lambda_j(A_i)}= \sum_{j=1}^n \frac{1}{\theta\left(\frac{n}{\e}+i-1\right)-\lambda_j(A_{i-1})},
\end{equation}
%\begin{equation}\label{eq:upper inductive-old}
%\sum_{j=1}^n \frac{1}{\theta\left(\frac{n}{\e}+i\right)-\lambda_j(A_i)}\le \sum_{j=1}^n \frac{1}{\theta\left(\frac{n}{\e}+i-1\right)-\lambda_j(A_{i-1})}\le %\frac{\e}{\theta},
%\end{equation}
and
\begin{equation}\label{eq:lower inductive}
\sum_{j=1}^n \frac{1}{\lambda_j(A_i)-\left(-\frac{n}{\e}+i\right)}\le \sum_{j=1}^n \frac{1}{\lambda_j(A_{i-1})-\left(-\frac{n}{\e}+i-1\right)}.
\end{equation}
(The sums in~\eqref{eq:upper inductive} and~\eqref{eq:lower inductive} represent the traces of certain matrices constructed from the $A_i$, and we will soon see that this is the source of their relevance.)

If we continue this construction for $k=\lceil n/\e^2\rceil$ steps, then by virtue of~\eqref{eq:spectal inductive} we would have
$$
\frac{\lambda_1(A_k)}{\lambda_n(A_k)}\le \frac{\theta\left(\frac{n}{\e}+\frac{n}{\e^2}\right)}{\frac{n}{\e^2}-\frac{n}{\e}}=\left(\frac{1+\e}{1-\e}\right)^2.
$$
By construction $A_{k}=\sum_{i=1}^m s_ix_i\otimes x_i$ with $s_1,\ldots,s_m\in [0,\infty)$ and at most $k$ of them nonzero. Thus, this process would prove the desired inequality~\eqref{eq:spectral goal}.

Note that while for our purposes we just need the spectral bounds in~\eqref{eq:spectal inductive}, we will need the additional conditions on the resolvent appearing in~\eqref{eq:upper inductive} and~\eqref{eq:lower inductive} in order for us to be able to perform the induction step. Note also that due to~\eqref{eq:spectal inductive} all the summands in~\eqref{eq:upper inductive} and~\eqref{eq:lower inductive} are positive.

Suppose that $i\ge 1$ and we have already constructed the scalars $t_1,\ldots,t_{i-1}\in [0,\infty)$ and vectors $y_1,\ldots,y_{i-1}\in \{x_1,\ldots,x_m\}$, and let $A_{i-1}$ be the corresponding positive semidefinite matrix. The proof of Theorem~\ref{thm:psd version} will be complete once we show that we can find $t_i\ge 0$ and $y_i\in \{x_1,\ldots,x_m\}$ so that the matrix $A_i=A_{i-1}+t_iy_i\otimes y_i$ satisfies the conditions \eqref{eq:spectal inductive}, \eqref{eq:upper inductive}, \eqref{eq:lower inductive}.

It follows from the inductive hypotheses~\eqref{eq:spectal inductive} and~\eqref{eq:lower inductive} that
\begin{multline}\label{eq:less than eps}
0<\frac{1}{\lambda_n(A_{i-1})-\left(-\frac{n}{\e}+i-1\right)}\le\sum_{j=1}^n \frac{1}{\lambda_j(A_{i-1})-\left(-\frac{n}{\e}+i-1\right)}\\\le \sum_{j=1}^n \frac{1}{\lambda_j(A_{0})-\left(-\frac{n}{\e}\right)}=\e<1.
\end{multline}
Hence, since $A_i-A_{i-1}$ is positive semidefinite,
$
\lambda_n(A_i)\ge \lambda_n(A_{i-1})> -\frac{n}{\e}+i,
$
implying the leftmost inequality in~\eqref{eq:spectal inductive}.

It will be convenient to introduce the following notation:
\begin{equation}\label{eq:def a}
a= \sum_{j=1}^n \frac{1}{\theta\left(\frac{n}{\e}+i-1\right)-\lambda_j(A_{i-1})}-\sum_{j=1}^n \frac{1}{\theta\left(\frac{n}{\e}+i\right)-\lambda_j(A_{i-1})}>0,
\end{equation}
and
\begin{equation}\label{eq:def b}
b= \sum_{j=1}^n \frac{1}{\lambda_j(A_{i-1})-\left(-\frac{n}{\e}+i\right)}-\sum_{j=1}^n \frac{1}{\lambda_j(A_{i-1})-\left(-\frac{n}{\e}+i-1\right)} >0.
\end{equation}
Note that~\eqref{eq:def b} makes sense since, as we have just seen, \eqref{eq:less than eps} implies  that we have $\lambda_n(A_{i-1})> -\frac{n}{\e}+i$. This, combined with~\eqref{eq:spectal inductive}, shows that the matrices $\theta\left(\frac{n}{\e}+i\right)I-A_{i-1}$ and $A_{i-1}-\left(-\frac{n}{\e}+i\right)I$ are positive definite, and hence also invertible. Therefore, for every $j\in \{1,\ldots,m\}$ we can consider the following quantities:
\begin{equation}\label{eq:alpha_j}
\alpha_j=\left\langle\left(\theta\left(\frac{n}{\e}+i\right)I-A_{i-1}\right)^{-1}x_j,x_j\right\rangle+\frac{1}{a}
\left\langle\left(\theta\left(\frac{n}{\e}+i\right)I-A_{i-1}\right)^{-2}x_j,x_j\right\rangle,
\end{equation}
and
\begin{equation}\label{eq:beta_j}
\beta_j=\frac{1}{b}
\left\langle\left(A_{i-1}-\left(-\frac{n}{\e}+i\right)I\right)^{-2}x_j,x_j\right\rangle-
\left\langle\left(A_{i-1}-\left(-\frac{n}{\e}+i\right)I\right)^{-1}x_j,x_j\right\rangle.
\end{equation}

The following lemma contains a crucial inequality between these quantities.
\begin{lemm}\label{lem:averaged}
We have $\sum_{j=1}^m\beta_j\ge \sum_{j=1}^m\alpha_j$.
\end{lemm}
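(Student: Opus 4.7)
The plan is to convert both sums into traces using the identity $\sum_{j=1}^m x_j \otimes x_j = I$ (which holds throughout Section~\ref{sec:proof} after the initial reduction). Since $\sum_j \langle M x_j, x_j \rangle = \trace(M)$ for every symmetric $n\times n$ matrix $M$, setting $U := \theta(n/\e+i)I - A_{i-1}$ and $L := A_{i-1} - (-n/\e+i)I$ (both positive definite, as already observed in the paragraph preceding~\eqref{eq:alpha_j}), the definitions of $\alpha_j$ and $\beta_j$ become
\begin{equation*}
\sum_{j=1}^m \alpha_j = \trace(U^{-1}) + \tfrac{1}{a}\trace(U^{-2}), \qquad \sum_{j=1}^m \beta_j = \tfrac{1}{b}\trace(L^{-2}) - \trace(L^{-1}).
\end{equation*}
Thus the whole inequality reduces to a scalar statement about the eigenvalues $\{\lambda_k\}_{k=1}^n$ of $A_{i-1}$.

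The key algebraic step is the identity $1/u^2 = 1/(u(u-\theta)) - \theta/(u^2(u-\theta))$, summed over the eigenvalues $u_k$ of $U$: in view of~\eqref{eq:def a}, $a = \theta \sum_k 1/(u_k(u_k-\theta))$, this identity gives $\trace(U^{-2})/a = 1/\theta - Y$ with $Y := (\theta/a)\sum_k 1/(u_k^2(u_k-\theta)) \ge 0$. The completely analogous identity for $L$, combined with~\eqref{eq:def b}, gives $\trace(L^{-2})/b = 1 + X$ with $X := (1/b)\sum_k 1/(\ell_k^2(\ell_k+1)) \ge 0$. Next I would iterate~\eqref{eq:upper inductive} back to $A_0 = 0$ to obtain $\trace((U - \theta I)^{-1}) = \e/\theta$, hence $\trace(U^{-1}) = \e/\theta - a$; and invoke~\eqref{eq:less than eps} to obtain $\trace((L + I)^{-1}) \le \e$, hence $\trace(L^{-1}) \le \e + b$. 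Plugging these in and using the identity $(1+\e)/\theta = 1 - \e$, the $\e$-terms cancel and the difference simplifies to
\begin{equation*}
\sum_{j=1}^m \beta_j - \sum_{j=1}^m \alpha_j \;\ge\; X + Y + (a - b).
\end{equation*}

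The main obstacle is that $a - b$ need not be non-negative, so non-negativity of $X$ and $Y$ is not quite enough by itself. I would resolve this by applying Cauchy--Schwarz to $b = \sum_k T_k^L S_k^L$, where $T_k^L := 1/\ell_k$ and $S_k^L := 1/(\ell_k+1)$:
\begin{equation*}
b^2 \;\le\; \Bigl(\sum_k (T_k^L)^2 S_k^L\Bigr)\Bigl(\sum_k S_k^L\Bigr) \;=\; (bX)\cdot \trace\bigl((L + I)^{-1}\bigr) \;\le\; \e\, bX,
\end{equation*}
where the final inequality uses~\eqref{eq:less than eps} once more. This yields the crucial bound $X \ge b/\e$. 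Substituting into the previous display, and using that $\e \in (0,1)$ so $(1-\e)/\e > 0$, one concludes
\begin{equation*}
\sum_{j=1}^m \beta_j - \sum_{j=1}^m \alpha_j \;\ge\; Y + a + b\cdot \frac{1-\e}{\e} \;\ge\; 0,
\end{equation*}
which is the conclusion of Lemma~\ref{lem:averaged}.
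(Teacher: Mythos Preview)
Your proof is correct and follows essentially the same approach as the paper's: both convert the sums to traces via $\sum_j x_j\otimes x_j=I$, use the partial-fraction identities to relate $\trace(U^{-2})/a$ to $1/\theta$ and $\trace(L^{-2})/b$ to $1$, iterate~\eqref{eq:upper inductive} back to $A_0$ together with~\eqref{eq:less than eps}, and close with the same Cauchy--Schwarz step on $b=\sum_k \ell_k^{-1}(\ell_k+1)^{-1}$. The only cosmetic difference is bookkeeping: the paper bounds $\sum_j\alpha_j\le 1-\e$ and $\sum_j\beta_j\ge 1-\e$ separately (discarding the positive terms $a$, $Y$ and weakening $X\ge b/\e$ to $X\ge b$), whereas you keep these terms and combine everything into a single inequality for the difference; neither variant buys anything over the other.
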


Assuming Lemma~\ref{lem:averaged} for the moment, we will show now how to complete the inductive construction. By Lemma~\ref{lem:averaged} there exists $j\in \{1,\ldots,m\}$ for which $\beta_j\ge \alpha_j$. We will fix this $j$ from now on. Denote
\begin{equation}\label{eq:the inductive def}
t_i=\frac{1}{\alpha_j}\quad \mathrm{and}\quad y_i=x_j.
\end{equation}

The following formula is straightforward to verify---it is known as the Sherman-Morrison formula (see~\cite[Section~2.1.3]{GV}): for every invertible $n\times n$ matrix $A$ and every $z\in \R^n$ we have
\begin{equation}\label{eq:SM}
\left(A+z\otimes z\right)^{-1}= A^{-1}-\frac{1}{1+\left\langle A^{-1}z,z\right\rangle}A^{-1}(z\otimes z)A^{-1}.
\end{equation}
Note that $\trace\left(A^{-1}(z\otimes z)A^{-1}\right)=\left\langle A^{-2}z,z\right\rangle$. Hence, by taking the trace of the identity~\eqref{eq:SM} we have
\begin{equation}\label{eq:trace}
\trace\left(\left(A+z\otimes z\right)^{-1}\right)=\trace\left(A^{-1}\right)-\frac{\left\langle A^{-2}z,z\right\rangle}{1+\left\langle A^{-1}z,z\right\rangle}.
\end{equation}

Now, for every $t\in (0,1/\alpha_j]$ we have
\begin{eqnarray}
&&\!\!\!\!\!\!\!\!\!\!\!\!\!\!\!\nonumber\sum_{j=1}^n \frac{1}{\theta\left(\frac{n}{\e}+i\right)-\lambda_j(A_{i-1}+t x_j\otimes x_j)}=\trace\left(\left(\theta\left(\frac{n}{\e}+i\right)I-A_{i-1}-tx_j\otimes x_j\right)^{-1}\right)\\\nonumber
&\stackrel{\eqref{eq:trace}}{=}& \sum_{j=1}^n \frac{1}{\theta\left(\frac{n}{\e}+i\right)-\lambda_j(A_{i-1})}+
\frac{\left\langle\left(\theta\left(\frac{n}{\e}+i\right)I-A_{i-1}\right)^{-2}
x_j,x_j\right\rangle}
{\frac{1}{t}-\left\langle\left(\theta\left(\frac{n}{\e}+i\right)I-A_{i-1}\right)^{-1}x_j,x_j\right\rangle}\\&\le& \label{eq:with equality} \sum_{j=1}^n \frac{1}{\theta\left(\frac{n}{\e}+i\right)-\lambda_j(A_{i-1})}
+\frac{\left\langle\left(\theta\left(\frac{n}{\e}+i\right)I-A_{i-1}\right)^{-2}
x_j,x_j\right\rangle}
{\alpha_j-\left\langle\left(\theta\left(\frac{n}{\e}+i\right)I-A_{i-1}\right)^{-1}x_j,x_j\right\rangle}\\\nonumber
&\stackrel{\eqref{eq:alpha_j}}{=}& \sum_{j=1}^n \frac{1}{\theta\left(\frac{n}{\e}+i\right)-\lambda_j(A_{i-1})}+a
\\&\stackrel{\eqref{eq:def a}}{=}& \sum_{j=1}^n \frac{1}{\theta\left(\frac{n}{\e}+i-1\right)-\lambda_j(A_{i-1})}\label{eq:finite}.
\end{eqnarray}
In~\eqref{eq:with equality} we used the fact that $t\le 1/\alpha_j$ and  $\alpha_j> \left\langle\left(\theta\left(\frac{n}{\e}+i\right)I-A_{i-1}\right)^{-1}x_j,x_j\right\rangle$. In particular, there is equality in~\eqref{eq:with equality} if $t=1/\alpha_j$. As $A_i=A_{i-1}+\frac{1}{\alpha_j} x_j\otimes x_j$, this proves~\eqref{eq:upper inductive}. Inequality~\eqref{eq:finite} also implies the rightmost inequality in~\eqref{eq:spectal inductive}. Indeed, assume for contradiction that $\lambda_1\left(A_{i-1}+\frac{1}{\alpha_j}x_j\otimes x_j\right)\ge \theta\left(\frac{n}{\e}+i\right)$. Since by the inductive hypothesis $\lambda_1(A_{i-1})< \theta\left(\frac{n}{\e}+i-1\right)<\theta\left(\frac{n}{\e}+i\right)$, it follows by continuity that there exists $t\in (0, 1/\alpha_j]$ for which $\lambda_1\left(A_{i-1}+tx_j\otimes x_j\right)= \theta\left(\frac{n}{\e}+i\right)$. This value of $t$ would make $\sum_{j=1}^n1/\left(\theta\left(\frac{n}{\e}+i\right)-\lambda_j(A_{i-1}+t x_j\otimes x_j)\right)$ be infinite, contradicting~\eqref{eq:finite} since by the inductive hypothesis all the summands in the right-hand side of~\eqref{eq:finite} are positive and finite.

It remains to prove~\eqref{eq:lower inductive}---this is the only place where the condition $\beta_j\ge \alpha_j$ will be used. We proceed as follows.
\begin{eqnarray*}
&&\!\!\!\!\!\!\!\!\!\!\!\!\!\!\!\sum_{j=1}^n \frac{1}{\lambda_j(A_i)-\left(-\frac{n}{\e}+i\right)}\stackrel{\eqref{eq:the inductive def}}{=}\trace\left(\left(A_{i-1}-\left(-\frac{n}{\e}+i\right)I+\frac{1}{\alpha_j}x_j\otimes x_j\right)^{-1}\right)\\
&\stackrel{\eqref{eq:trace}}{=}& \sum_{j=1}^n \frac{1}{\lambda_j(A_{i-1})-\left(-\frac{n}{\e}+i\right)}-\frac{\left\langle\left(A_{i-1}-
\left(-\frac{n}{\e}+i\right)I\right)^{-2}x_j,x_j\right\rangle}{\alpha_j+\left\langle\left(A_{i-1}-\left(-\frac{n}{\e}+i\right)I\right)^{-1}x_j,x_j\right\rangle}\\
&\stackrel{(\beta_j\ge \alpha_j)}{\le}& \sum_{j=1}^n \frac{1}{\lambda_j(A_{i-1})-\left(-\frac{n}{\e}+i\right)}-\frac{\left\langle\left(A_{i-1}-
\left(-\frac{n}{\e}+i\right)I\right)^{-2}x_j,x_j\right\rangle}{\beta_j+\left\langle\left(A_{i-1}-\left(-\frac{n}{\e}+i\right)I\right)^{-1}x_j,x_j\right\rangle}\\
&\stackrel{\eqref{eq:beta_j}}{=}& \sum_{j=1}^n\frac{1}{\lambda_j(A_{i-1})-\left(-\frac{n}{\e}+i\right)}-b\\
&\stackrel{\eqref{eq:def b}}{=}& \sum_{j=1}^n \frac{1}{\lambda_j(A_{i-1})-\left(-\frac{n}{\e}+i-1\right)}.
\end{eqnarray*}
This concludes the inductive construction, and hence also the proof of Theorem~\ref{thm:psd version}, provided of course that we prove the crucial inequality contained in Lemma~\ref{lem:averaged}.
\begin{proof}[Proof of Lemma~\ref{lem:averaged}] It is straightforward to check that the identity~\eqref{eq:decompositon of identity} implies that for every $n\times n$ matrix $A$ we have
\begin{equation}\label{eq:trace identity}
\sum_{j=1}^m \left\langle Ax_j,x_j\right \rangle=\trace(A).
\end{equation}
Hence,
\begin{equation}\label{eq:sum alpha}
\sum_{j=1}^m\alpha_j\stackrel{\eqref{eq:alpha_j}\wedge\eqref{eq:trace identity}}{=}\tr\left(\left(\theta\left(\frac{n}{\e}+i\right)I-A_{i-1}\right)^{-1}\right)+
\frac{\tr\left(\left(\theta\left(\frac{n}{\e}+i\right)I-A_{i-1}\right)^{-2}\right)}{a},
\end{equation}
and,
\begin{equation}\label{eq:sum beta}
\sum_{j=1}^m\beta_j\stackrel{\eqref{eq:beta_j}\wedge\eqref{eq:trace identity}}{=}\frac{\trace\left(\left(A_{i-1}-\left(-\frac{n}{\e}+i\right)I\right)^{-2}\right)}{b}-
 \trace\left(\left(A_{i-1}-\left(-\frac{n}{\e}+i\right)I\right)^{-1}\right).
\end{equation}

Now,
\begin{multline}\label{eq:first trace alpha}
\trace\left(\left(\theta\left(\frac{n}{\e}+i\right)I-A_{i-1}\right)^{-1}\right)=\sum_{j=1}^n \frac{1}{\theta\left(\frac{n}{\e}+i\right)-\lambda_j(A_{i-1})}
\\\le \sum_{j=1}^n \frac{1}{\theta\left(\frac{n}{\e}+i-1\right)-\lambda_j(A_{i-1})}\stackrel{\eqref{eq:upper inductive}}{=}\sum_{j=1}^n \frac{1}{\frac{\theta n}{\e}-\lambda_j(A_{0})}=\frac{\e}{\theta},
\end{multline}
and
\begin{multline}\label{eq:second trace alpha}
\frac{1}{a}\cdot \trace\left(\left(\theta\left(\frac{n}{\e}+i\right)I-A_{i-1}\right)^{-2}\right)\\\stackrel{\eqref{eq:def a}}{=}\frac{\sum_{j=1}^n \left(\theta\left(\frac{n}{\e}+i\right)-\lambda_j(A_{i-1})\right)^{-2}}{\theta\sum_{j=1}^n\left(\theta\left(\frac{n}{\e}+i\right)-
\lambda_j(A_{i-1})\right)^{-1}\left(\theta\left(\frac{n}{\e}+i-1\right)-\lambda_j(A_{i-1})\right)^{-1}}\le \frac{1}{\theta}.
\end{multline}
Hence,
\begin{equation}\label{eq:upper sum alpha}
\sum_{j=1}^n \alpha_j\stackrel{\eqref{eq:sum alpha}\wedge\eqref{eq:first trace alpha}\wedge\eqref{eq:second trace alpha}}{\le} \frac{1+\e}{\theta}\stackrel{\eqref{eq:def theta}}{=} 1-\e.
\end{equation}

In order to use~\eqref{eq:sum beta}, we first bound $b$ as follows.
\begin{eqnarray*}
\nonumber b&\stackrel{\eqref{eq:def b}}{=}&\sum_{j=1}^n\frac{1}{\left(\lambda_{j-1}(A_{i-1})-\left(-\frac{n}{\e}+i\right)\right)
\left(\lambda_{j-1}(A_{i-1})-\left(-\frac{n}{\e}+i-1\right)\right)}\\\nonumber
&\le& \left(\sum_{j=1}^n \frac{1}{\lambda_j(A_{i-1})-\left(-\frac{n}{\e}+i-1\right)}\right)^{1/2}\\\nonumber&&\quad\cdot\left(\sum_{j=1}^n\frac{1}{\left(\lambda_{j-1}(A_{i-1})-\left(-\frac{n}{\e}+i\right)\right)^2
\left(\lambda_{j-1}(A_{i-1})-\left(-\frac{n}{\e}+i-1\right)\right)}\right)^{1/2}\\\nonumber
&\stackrel{\eqref{eq:less than eps}}{\le}&\sqrt{\e}\left(\sum_{j=1}^n\frac{1}{\left(\lambda_{j-1}(A_{i-1})-\left(-\frac{n}{\e}+i\right)\right)^{2}}-b\right)^{1/2}\\
&\le &\left(\sum_{j=1}^n\frac{1}{\left(\lambda_{j-1}(A_{i-1})-\left(-\frac{n}{\e}+i\right)\right)^{2}}-b\right)^{1/2},
\end{eqnarray*}
which simplifies to give the bound
\begin{equation}\label{eq:use CW}
\frac{1}{b}\sum_{j=1}^n\frac{1}{\left(\lambda_{j-1}(A_{i-1})-\left(-\frac{n}{\e}+i\right)\right)^{2}}=
\frac{\trace\left(\left(A_{i-1}-\left(-\frac{n}{\e}+i\right)I\right)^{-2}\right)}{b}\ge b+1.
\end{equation}
Hence,
\begin{multline}\label{lower sum beta}
\sum_{j=1}^m \beta_j \stackrel{\eqref{eq:sum beta}\wedge\eqref{eq:use CW}}{\ge} b+1-\sum_{j=1}^n \frac{1}{\lambda_j(A_{i-1})-\left(-\frac{n}{\e}+i\right)}\\
\stackrel{\eqref{eq:def b}}{=} 1-\sum_{j=1}^n \frac{1}{\lambda_j(A_{i-1})-\left(-\frac{n}{\e}+i-1\right)}\stackrel{\eqref{eq:less than eps}}\ge 1-\e.
\end{multline}
Lemma~\ref{lem:averaged} now follows from~\eqref{eq:upper sum alpha} and~\eqref{lower sum beta}.
\end{proof}

\begin{rema}
In the inductive construction, instead of ensuring equality in~\eqref{eq:upper inductive}, we could have ensured equality in~\eqref{eq:lower inductive} and replaced the equality sign in~\eqref{eq:upper inductive} with the inequality sign $\le$. This would be achieved by choosing $t_i=1/\beta_j$ in~\eqref{eq:the inductive def}. Alternatively we could have chosen $t_i$ to be any value in the interval $[1/\beta_j,1/\alpha_j]$, in which case both inductive conditions~\eqref{eq:upper inductive} and \eqref{eq:lower inductive} would be with the inequality sign $\le$.
\end{rema}

\section{Approximate John decompositions}\label{sec: john}

\newcommand{\BM}{\mathrm{BM}}

Let $B_2^n\subseteq \R^n$ be the  unit ball with respect to the standard Euclidean metric. Recall that an ellipsoid $\mathcal E=TB_2^n\subseteq \R^n$ is an image of $B_2^n$ under an invertible linear transformation $T:\R^n\to \R^n$. Let $K\subseteq \R^n$ be a centrally symmetric (i.e., $K=-K$) convex body. John's theorem~\cite{Jo} states that among the ellipsoids that contain $K$, there exists a unique ellipsoid of minimal volume. This ellipsoid is called the John ellipsoid of $K$. If the John ellipsoid of $K$ happens to be $B_2^n$, the body $K$ is said to be in John position. For any $K$ there is a linear invertible transformation $T:\R^n\to \R^n$ such that $TK$ is in John position. The Banach-Mazur distance between two centrally symmetric convex bodies $K,L\subseteq \R^n$, denoted $d_{\BM}(K,L)$, is the infimum over those $s>0$ for which there exists a linear operator $T:\R^n\to \R^n$ satisfying $K\subseteq TL\subseteq sK$.

 John~\cite{Jo} proved that if $K$ is in John position then there exist contact points $x_1,\ldots,x_m\in (\partial K)\cap (\partial B_2^n)$ and positive weights $c_1,\ldots,c_m>0$ such that
\begin{equation}\label{eq:center of mass}
\sum_{i=1}^m c_i x_i=0,
\end{equation}
and
\begin{equation}\label{eq:john decomposition}
\sum_{i=1}^m c_i x_i\otimes x_i=I.
\end{equation}
When conditions~\eqref{eq:center of mass} and~\eqref{eq:john decomposition} are satisfied we say that $\{x_i,c_i\}_{i=1}^m$ form a John decomposition of the identity. It is hard to overstate the importance of John decompositions in analysis and geometry, and we will not attempt to discuss their applications here. Interested readers are referred to~\cite{Bal2} for a taste of this rich field.

John proved that one can always take $m\le n(n+1)/2$. This bound cannot be improved in general (see~\cite{PT} for an even stronger result of this type). However, if one allows an arbitrarily small perturbation of the body $K$, it is possible to reduce the number of contact points with the John ellipsoid to grow linearly in $n$. This sharp result is a consequence of the Batson-Spielman-Srivastava sparsification theorem~\ref{thm:psd version}, and it was proved by Srivastava in~\cite{Sr1}. The precise formulation of Srivastava's theorem is as follows.

\begin{theo}\label{thm:sr john}
 If $K\subseteq \R^n$ is a centrally symmetric convex body and $\e\in (0,1)$ then there exists a convex body $L\subseteq \R^n$ with $d_{\BM}(K,L)\le 1+\e$ such that $L$ has at most $m= O(n/\e^2)$ contact points with its John ellipsoid.
\end{theo}
The problem of perturbing a convex body so as to reduce the size of its John decomposition was studied by Rudelson in~\cite{Ru1}, where the bound $m\le C(\e)n(\log n)^3$ was obtained via a randomized construction. In~\cite{Ru2} Rudelson announced an improved bound of $m\le C(\e)n\log n(\log\log n)^2$ using a different probabilistic argument based on majorizing measures, and in~\cite{Ru3} Rudelson obtained the bound $m=O(\e^{-2}n\log n)$, which was the best known bound prior to Srivastava's work.

The key step in all of these proofs is to extract from~\eqref{eq:john decomposition} an approximate John decomposition. This amounts to finding weights $s_1,\ldots,s_m\in [0,\infty)$, such that not many of them are nonzero, and such that we have the operator norm bound $\left\|I-\sum_{i=1}^ms_i x_i\otimes x_i\right\|\le \e.$ This is exactly what Theorem~\ref{thm:psd version} achieves, with $|\{i\in \{1,\ldots, m\}:\ s_i\neq 0\}|\le c(\e)n$. Prior to the deterministic construction of Batson-Spielman-Strivastava~\cite{BSS}, such approximate John decompositions were constructed by Rudelson via a random selection argument, and a corresponding operator-valued  concentration inequality. In particular, Rudelson's bound~\cite{Ru3} $m=O(\e^{-2}n\log n)$ uses an influential argument of Pisier. Such methods are important to a variety of applications (see~\cite{RV,Tr2}), and in particular this is how Spielman-Srivastava~\cite{SS1} proved their earlier $O(\e^{-2}n\log n)$ sparsification theorem. While yielding suboptimal results, this method is important since it has almost linear (randomized) running time. We refer to the recent work of  Adamczak, Litvak, Pajor and Tomczak-Jaegermann for deeper investigations of randomized approximations of certain decompositions of the identity (under additional assumptions).

\begin{proof}[Proof of Theorem~\ref{thm:sr john}]
Suppose that $K$ is in John position, and let $\{x_i,c_i\}_{i=1}^n$ be the corresponding John-decomposition. Since $\sum_{i=1}^m (\sqrt{c_i} x_i)\otimes (\sqrt{c_i} x_i)=I$, we may use Theorem~\ref{thm:psd version} to find $s_1,\ldots, s_m\ge 0$, with at most $O(n/\e^2)$ of them nonzero, such that if we set $A=\sum_{i=1}^m s_i c_i x_i\otimes x_i$, then the matrices $A-I$ and $(1+\e/4)I-A$ are positive semidefinite. Thus $\|A-I\|\le \e/4$.

%By considering the approximate John decomposition corresponding to the weighted contact points $\{x_i,c_is_i/2\}_{i=1}^m\cup \{x_i,-c_is_i/2\}_{i=1}^m$, we may assume %that $\sum_{i=1}^m x_i=0$.

The rest of the proof follows the argument in~\cite{Ru1,Ru2}. Write $\mathcal E=A^{1/2}B_2^n$. Then since $\|A-I||\le \e/4$ we have
$$
\left(1-\frac{\e}{4}\right)\mathcal{E}\subseteq B_2^n\subseteq \left(1+\frac{\e}{4}\right)\mathcal{E}.
$$
Denote $y_i=x_i/\|A^{-1/2}x_i\|_2\in \partial\mathcal{E}$ and define
$$
H=\mathrm{conv}\left(\{\pm y_i\}_{i\in J}\bigcup\left(\frac{1}{1+\e}K\right)\right),
$$
where $J=\{i\in \{1,\ldots, m\}: s_i\neq 0\}$. Then $H$ is a centrally symmetric convex body, and by a straightforward argument one checks (see~\cite{Ru1,Ru2}) that $\frac{1}{1+\e}K\subseteq H\subseteq (1+2\e)K$.

Set $L=A^{-1/2}H$. Since $K\subseteq B_2^n$ we have $(\partial H)\cap (\partial\mathcal{E})= \{\pm y_i\}_{i\in J}$, and therefore $(\partial L)\cap (\partial B_2^n)= \{\pm z_i\}_{i\in J}$, where $z_i=A^{-1/2}y_i$. Writing $a_i=\frac{c_is_i}{2}\|A^{1/2}x_i\|_2$, we have
\begin{multline*}
\sum_{i\in J} a_iz_i\otimes z_i+\sum_{i\in J} a_i(-z_i)\otimes (-z_i)=\sum_{i=1}^m s_ic_i (A^{-1/2}x_i)\otimes (A^{-1/2}x_i)\\
= A^{-1/2}\left(\sum_{i=1}^m s_ic_i x_i\otimes x_i\right)A^{-1/2}=A^{-1/2}AA^{-1/2}=I.
\end{multline*}
Hence $\{\pm z_i, a_i\}_{i\in J}$ form a John decomposition of the identity consisting of contact points of $L$ and $B_2^n\supseteq L$. By John's uniqueness theorem~\cite{Jo} it follows that $B_2^n$ is the John ellipsoid of $L$.
\end{proof}

\begin{rema}
Rudelson~\cite{Ru2,Ru3} also studied approximate John decompositions for non-centrally symmetric convex bodies. He proved that Theorem~\ref{thm:sr john} holds if $K$ is not necessarily centrally symmetric, with $m=O(\e^{-2}n\log n)$. Note that in the non-symmetric setting one needs to define the Banach-Mazur appropriately: $d_{\BM}(K,L)$ is the infimum over those $s>0$ for which there exists $v\in \R^n$ and a linear operator $T:\R^n\to \R^n$ satisfying $K+v\subseteq TL\subseteq s(K+v)$. Srivastava~\cite{Sr1}, based on a refinement of the proof technique of Theorem~\ref{thm:psd version}, proved that  if $K\subseteq \R^n$ is a convex body and $\e\in (0,1)$,  then there exists a convex body $L\subseteq \R^n$ with $d_{\BM}(K,L)\le \sqrt{5}+\e$ such that $L$ has at most $m= O(n/\e^3)$ contact points with its John ellipsoid. Thus, it is possible to get bounded perturbations with linearly many contact points with the John ellipsoid, but it remains open whether this is possible with $1+\e$ perturbations. The problem is how to ensure condition~\eqref{eq:center of mass} for an approximate John decomposition using the Batson-Spielman-Srivastava technique---for symmetric bodies this is not a problem since we can take the reflections of the points in the approximate John decomposition.
\end{rema}

\section{Dimensionality reduction in $L_p$ spaces}\label{sec:dim}

Fix $p\ge 1$. In what follows $L_p$ denotes the space of $p$-integrable functions on $[0,1]$ (equipped with Lebesgue measure), and $\ell_p^n$ denotes the space $\R^n$, equipped with the $\ell_p$ norm $\|x\|_p=\left(\sum_{i=1}^n |x_i|^p\right)^{1/p}$. Since any $n$-dimensional subspace of $L_2$ is isometric to $\ell_2^n$, for any $x_1,\ldots,x_n\in L_2$ there exist $y_1,\ldots,y_n\in \ell_2^n$ satisfying $\|x_i-x_j\|_2=\|y_i-y_j\|_2$ for all $i,j\in \{1,\ldots,n\}$. But, more is true if we allow errors: the Johnson-Lindenstrauss lemma~\cite{JL} says that for every $x_1,\ldots,x_n\in L_2$, $\e\in (0,1)$ there exists $k=O(\e^{-2}\log n)$ and $y_1,\ldots,y_n\in \ell_2^k$ such that $\|x_i-x_j\|_2\le \|y_i-y_j\|_2\le (1+\e)\|x_i-x_j\|_2$ for all $i,j\in \{1,\ldots,n\}$. This bound on $k$ is known to be sharp up to a $O(\log (1/\e))$ factor~\cite{Al}.

In $L_p$ for $p\neq 2$ the situation is much more mysterious. Any $n$-points in $L_p$ embed isometrically into $\ell_p^k$ for $k=n(n-1)/2$, and this bound on $k$ is almost optimal~\cite{Bal1}. If one is interested, as in the Johnson-Lindenstrauss lemma, in embeddings of $n$-point subsets of $L_p$ into $\ell_p^k$ with a $1+\e$ multiplicative error in the pairwise distances, then the best known bound on $k$, due to Schechtman~\cite{Sc2}, was
\begin{equation}\label{eq:cases}
k\le \left\{
\begin{array}{ll}
C(\e)n\log n & p\in [1,2),\\
C(p,\e)n^{p/2}\log n & p\in (2,\infty).
\end{array}
\right.
\end{equation}
%(\cite{Sc2} states the bound on $k$ in~\eqref{eq:cases} only when $p\in [1,2)$, but the argument of~\cite{Sc2} works for $p>2$ as well.)
%$$
%k\le n^{\max\{1,p/2\}}\log n.
%$$

We will see now how Theorem~\ref{thm:psd version} implies improvements to the bounds in~\eqref{eq:cases} when $p=1$ and when $p$ is an even integer. The bounds in~\eqref{eq:cases} for $p\notin \{1\}\cup 2\N$ remain the best currently known. We will start with the improvement when $p=1$, which is due to Newman and Rabinovich~\cite{NR}. In the case $p\in 2\N$, which is due to Schechtman~\cite{Sc3}, more is true: the claimed bound on $k$ holds for embeddings of any $n$-dimensional linear subspace of $L_p$ into $\ell_p^k$, and when stated this way (rather than for $n$-point subsets of $L_p$) it is sharp~\cite{BDGJN}.

\subsection{Finite subsets of $L_1$}\label{sec:L1}

It is known that a Johnson-Lindenstrauss type result cannot hold in $L_1$: Brinkman and Charikar~\cite{BC} proved that for any $D>1$ there exists arbitrarily large $n$-point subsets $\{x_1,\ldots,x_n\}\subseteq L_1$ with the property that if they embed with distortion $D$ into $\ell_1^k$ then necessarily $k\ge n^{c/D^2}$, where $c>0$ is a universal constant. Here, and in what follows, a metric space $(X,d)$ is said to embed with distortion $D$ into a normed space $Y$ if there exists $f:X\to Y$ satisfying $d(x,y)\le \|f(x)-f(y)\|\le Dd(x,y)$ for all $x,y\in X$. No nontrivial restrictions on bi-Lipschitz dimensionality reduction are known for finite subsets of $L_p$, $p\in (1,\infty)\setminus \{2\}$. On the positive side, as stated in~\eqref{eq:cases}, Schechtman proved~\cite{Sc2} that any $n$-point subset of $L_1$ embeds with distortion $1+\e$ into $\ell_1^k$, for some $k\le C(\e)n\log n$. The following theorem of Newman and Rabinovich~\cite{NR} gets the first asymptotic improvement over Schechtman's 1987 bound, and is based on the Batson-Spielman-Srivastava theorem.

\begin{theo}\label{thm:NR}
For any $\e\in (0,1)$, any $n$-point subset of $L_1$ embeds with distortion $1+\e$ into $\ell_1^k$ for some $k=O(n/\e^2)$.
\end{theo}

\begin{proof} Let $f_1,\ldots,f_n\in L_1$ be distinct. By the cut-cone representation of $L_1$ metrics, there exists nonnegative weights $\{w_E\}_{E\subseteq \{1,\ldots,n\}}$ such that for all $i,j\in\{1,\ldots,n\}$ we have
\begin{equation}\label{eq:cut cone}
\|f_i-f_j\|_1=\sum_{E\subseteq \{1,\ldots,n\}} w_E |\1_E(i)-\1_E(j)|.
\end{equation}
See~\cite{DL} for a proof of~\eqref{eq:cut cone} (see also~\cite[Section~3]{Na} for a quick proof).

For every $E\subseteq \{1,\ldots,n\}$ define $x_E=\sqrt{w_E}\sum_{i\in E} e_i\in \R^n$ ($e_1,\ldots,e_n$ is the standard basis of $\R^n$). By Theorem~\ref{thm:psd version} there exists a subset $\sigma\subseteq 2^{\{1,\ldots,n\}}$ with $|\sigma|=O(n/\e^2)$, and nonnegative weights $\{s_E\}_{E\in \sigma}$, such that for every $y\in \R^n$ we have
\begin{equation}\label{eq:cuts sparsified}
\sum_{E\subseteq \{1,\ldots,n\}} w_E \left(\sum_{i\in E} y_i\right)^2\le \sum_{E\in \sigma} s_Ew_E \left(\sum_{i\in E} y_i\right)^2\le (1+\e)\sum_{E\subseteq \{1,\ldots,n\}} w_E \left(\sum_{i\in E} y_i\right)^2.
\end{equation}

Define $z_1,\ldots,z_n\in \R^\sigma$ by $z_i=\left(s_Ew_E\1_E(i)\right)_{E\in \sigma}$. For $i,j\in \{1,\ldots, n\}$ apply~\eqref{eq:cuts sparsified} to the vector $y=e_i-e_j$, noting that for all $E\subseteq \{1,\ldots, n\}$, for this vector $y$ we have $\left(\sum_{i\in E} y_i\right)^2=|\1_E(i)-\1_E(j)|$.

\begin{multline*}
\|f_i-f_j\|_1\stackrel{\eqref{eq:cut cone}}{=}\sum_{E\subseteq \{1,\ldots,n\}} w_E |\1_E(i)-\1_E(j)|\stackrel{\eqref{eq:cuts sparsified}}{\le}
\sum_{E\in \sigma} s_Ew_E|\1_E(i)-\1_E(j)|\\=\|z_i-z_j\|_1\stackrel{\eqref{eq:cuts sparsified}}{\le} (1+\e)\sum_{E\subseteq \{1,\ldots,n\}} w_E |\1_E(i)-\1_E(j)|
\stackrel{\eqref{eq:cut cone}}{=}\|f_i-f_j\|_1.\qedhere
\end{multline*}
\end{proof}

\begin{rema}
Talagrand~\cite{Ta1} proved that any $n$-dimensional linear subspace of $L_1$ embeds with distortion $1+\e$ into $\ell_1^k$, with $k\le C(\e)n\log n$. This strengthens Schechtman's bound in~\eqref{eq:cases} for $n$-point subsets of $L_1$, since it achieves a low dimensional embedding of their span. It would be very interesting to remove the $\log n$ term in Talagrand's theorem, as this would clearly be best possible. Note that $n$-point subsets of $L_1$ can conceivably be embedded into $\ell_1^k$, with $k\ll n$. Embedding into at least $n$ dimensions (with any finite distortion) is a barrier whenever the embedding proceeds by actually embedding the span of the given $n$ points. The Newman-Rabinovich argument based on sparsification proceeds differently, and one might hope that it could be used to break the $n$ dimensions  barrier for $n$-point subsets of $L_1$. This turns out to be possible:  the forthcoming paper~\cite{ANN} shows that for any $D>1$,  any $n$-point  subset of $L_1$ embeds with distortion $D$ into $\ell_1^k$, with $k=O(n/D)$.
%The argument is more involved than the above argument for $1+\e$ embeddings, and it will not be included in this %survey.
\end{rema}

\subsection{Finite dimensional subspaces of $L_p$ for even $p$}\label{sec:Lp}

Given an $n\in \N$ and $\e\in (0,1)$, what is the smallest $k\in \N$ such that any $n$-dimensional subspace of $L_p$ linearly embeds with distortion $1+\e$ into $\ell_p^k$? This problem has been studied extensively~\cite{Sc1,Sc2,BLM,Ta1,Ta2,JS1,SZ,Zv,JS2}, the best known bound on $k$ being as follows.
\begin{equation*}\label{eq:case2}
k\le \left\{
\begin{array}{ll}
C(p,\e)n\log n(\log\log n)^2 & p\in (0,1) \quad\ \, \text{\cite{Zv}},\\
C(\e)n\log n & p=1 \quad\quad\quad\, \text{\cite{Ta1}},\\
C(\e)n\log n(\log\log n)^2 & p\in (1,2)\quad\ \, \text{\cite{Ta2}},\\
C(p,\e)n^{p/2}\log n & p\in (2,\infty) \quad \text{\cite{BLM}}.
\end{array}
\right.
\end{equation*}
In particular, Bourgain, Lindenstrauss and Milman~\cite{BLM} proved that if $p\in (2,\infty)$ then one can take $k\le C(p,\e)n^{p/2}\log n$. It was long known~\cite{BDGJN}, by considering subspaces of $L_p$ that are almost isometric to $\ell_2^n$, that necessarily $k\ge c(p,\e)n^{p/2}$. We will now show an elegant argument of Schechtman, based on Theorem~\ref{thm:psd version}, that removes the $\log n$ factor when $p$ is an even integer, thus obtaining the first known sharp results for some values of $p\neq 2$.

\begin{theo}\label{thm:schechtman}
Assume that $p>2$ is an even integer,  $n\in \N$ and $\e\in (0,1)$. Then any $n$-dimensional subspace $X$ of $L_p$ embeds with distortion $1+\e$ into $\ell_p^k$ for some $k\le (cn/p)^{p/2}/\e^2$, where $c$ is a universal constant.
\end{theo}

\begin{proof}
By a standard argument (approximating a net in the sphere of $X$ by simple functions), we may assume that $X\subseteq \ell_p^m$ for some finite (huge) $m\in \N$. In what follows, when we use multiplicative notation for vectors in $\R^m$, we mean coordinate-wise products, i.e., for $x,y\in \R^m$, write $xy=(x_1y_1,\ldots,x_my_m)$ and for $r\in \N$ write $x^r=(x_1^r,\ldots,x_m^r)$.

Let $u_1,\ldots, u_n$ be a basis of $X$. Consider the following subspace of $\R^m$:
$$
Y=\mathrm{span}\left(\left\{u_{j_1}^{p_1}u_{j_2}^{p_2}\cdots u_{j_\ell}^{p_\ell}:\ \ell\in \N,\ j_1,\ldots,j_\ell\in \{1,\ldots,n\},\ p_1+\cdots+p_\ell=\frac{p}{2}\right\}\right).
$$
Then
$$
d=\dim(Y)\le \binom{n+p/2-1}{p/2}\le \left(\frac{10n}{p}\right)^{p/2}.
$$
Thinking of $Y$ as a $d$-dimensional subspace of $\ell_2^m$, let $v_1,\ldots,v_d$ be an orthonormal basis of $Y$. Define $x_1,\ldots,x_m\in Y$ by $x_i=\sum_{j=1}^d \langle v_j,e_i\rangle v_j$, where as usual $e_1,\ldots,e_m$ is the standard coordinate basis of $\R^m$. Note that by definition (since $v_1,\ldots,v_d$ is an orthonormal basis of $Y$), for every $y\in Y$ and for every $i\in \{1,\ldots,m\}$ we have $\langle x_i,y\rangle=\langle y,e_i\rangle =y_i$. By Theorem~\ref{thm:psd version}, there exists a subset $\sigma\subseteq \{1,\ldots,m\}$ with $|\sigma|=O(d/(p\e)^2)\le (cn/p)^{p/2}/\e^2$, and $\{s_i\}_{i\in \sigma}\subseteq (0,\infty)$, such that for all $y\in Y$ we have
\begin{equation}\label{eq:gid}
\sum_{i=1}^m y_i^2\le \sum_{i\in \sigma} s_iy_i^2\le \left(1+\frac{\e p}{4}\right) \sum_{i=1}^m y_i^2.
\end{equation}
In particular, since by the definition of $Y$ for every $x\in X$ we have $x^{p/2}\in Y$,
$$
\|x\|_p=\left(\sum_{i=1}^m x_i^p\right)^{1/p}\stackrel{\eqref{eq:gid}}{\le} \left(\sum_{i\in \sigma} s_ix_i^p\right)^{1/p}\stackrel{\eqref{eq:gid}}{\le} \left(1+\frac{\e p}{4}\right)^{1/p}\left(\sum_{i=1}^m x_i^p\right)^{1/p}\le (1+\e)\|x\|_p.
$$
Thus  $x\mapsto (s_i^{1/p}x_i)_{i\in \sigma}$ maps $X$ into $\ell_p^\sigma\subseteq \ell_p^{(cn/p)^{p/2}/\e^2}$ and has distortion $1+\e$.
\end{proof}

\begin{rema}
The bound on $k$ in Theorem~\ref{thm:schechtman} is sharp also in terms of the dependence on $p$. See~\cite{Sc3} for more information on this topic.
\end{rema}

%$k\le C(p,\e)n^{p/2}\log n$ for $p\in (2,\infty)$ \cite{BLM}, $k\le C(\e)n\log n(\log\log n)^2$ for $p\in (1,2)$ \cite{Ta2}, $k\le C(\e)n\log n$ for $p=1$ \cite{Sc1}, and %$k\le C(p,\e)n\log n(\log\log n)^2$ for $p\in (0,1)$ \cite{SZ,Zv}.

\section{The restricted invertibility principle}\label{sec:BT}

%\section{The Bourgain-Tzafriri restricted invertibility phenomenon}

In this section square matrices are no longer assumed to be symmetric. The ensuing discussion does not deal with a direct application of the statement of Theorem~\ref{thm:psd version}, but rather with an application of the method that was introduced by Batson-Spielman-Srivastava to prove Theorem~\ref{thm:psd version}.

Bourgain and Tzafriri studied in~\cite{BT1,BT2,BT3}  conditions on matrices which ensure that they have large ``well invertible" sub-matrices, where well invertibility refers to control of the operator norm of the inverse. Other than addressing  a fundamental question, such phenomena are very important to a variety of interesting applications that we will not survey here.

To state the main results of Bourgain-Tzafriri, we need the following notation. For $\sigma\subseteq \{1,\ldots,n\}$ let $R_\sigma:\R^n\to \R^\sigma$ be  given by restricting the coordinates to $\sigma$, i.e., $R_\sigma(\sum_{i=1}^n a_ie_i)=\sum_{i\in \sigma} a_ie_i$ (as usual, $\{e_i\}_{i=1}^n$ is the standard coordinate basis of $\R^n$). In matrix notation, given an operator $T:\R^n\to \R^n$, the operator $R_\sigma TR^*_\sigma:\R^\sigma\to \R^\sigma$ corresponds to the $\sigma\times \sigma$ sub-matrix $(\langle Te_i,e_j\rangle)_{i,j\in \sigma}$. The operator norm of $T$ (as an operator from $\ell_2^n$ to $\ell_2^n$) will be denoted below by $\|T\|$, and the  Hilbert-Schmidt norm of $T$ will be denoted $\|T\|_{\HS}=\sqrt{\sum_{i=1}^n\sum_{j=1}^n \langle Te_i,e_j\rangle^2}$.

The following theorem from~\cite{BT1,BT3} is known as the Bourgain-Tzafriri restricted invertibility principle.

\begin{theo}\label{thm:BT1} There exist universal constants $c,K>0$ such that for every $n\in \N$ and every linear operator $T:\R^n\to \R^n$ the following assertions hold true:
\begin{enumerate}
\item If $\|Te_i\|_2=1$ for all $i\in \{1,\ldots,n\}$ then there exists a subset
$\sigma\subseteq \{1,\ldots,n\}$ satisfying
\begin{equation}\label{eq:old sigma}
|\sigma|\ge  \frac{c n}{\|T\|^2},
\end{equation}
such that $R_\sigma T^*T R_\sigma^*$ is invertible and
\begin{equation}\label{eq:K}
\left\|\left(R_\sigma T^*T R_\sigma^*\right)^{-1}\right\|\le K.
\end{equation}
\item If $\langle Te_i,e_i\rangle=1$ for all $i\in \{1,\ldots,n\}$ then for all $\e\in (0,1)$ there exists a subset $\sigma\subseteq \{1,\ldots,n\}$ satisfying
\begin{equation}\label{eq:quadratic}
|\sigma|\ge \frac{c\e^2 n}{\|T\|^2},
\end{equation}
such that $R_\sigma T^*T R_\sigma^*$ is invertible and
\begin{equation}\label{eq:1+eps}
\left\|\left(R_\sigma T^*T R_\sigma^*\right)^{-1}\right\|\le 1+\e.
\end{equation}
\end{enumerate}
\end{theo}
The quadratic dependence on $\e$ in~\eqref{eq:quadratic} cannot be
improved~\cite{BHKW}. Observe that~\eqref{eq:K} is equivalent to the
following assertion:
\begin{equation}\label{BT norm condition}
\left\|\sum_{i\in \sigma} a_i Te_i\right\|_2^2\ge
\frac{1}{K}\sum_{i\in \sigma} a_i^2\quad\quad \forall \{a_i\}_{i\in
\sigma}\subseteq \R.
\end{equation}
We note that if $T$ satisfies the assumption of the first assertion
of Theorem~\ref{thm:BT1} then $T^*T$ satisfies the assumption of the
second assertion of Theorem~\ref{thm:BT1}. Hence, the second
assertion of Theorem~\ref{thm:BT1} implies the first assertion of
Theorem~\ref{thm:BT1} with~\eqref{eq:K} replaced by
$\left\|\left(R_\sigma T^*T R_\sigma^*\right)^{-1}\right\|\le
(1+\e)\|T\|^2$ and~\eqref{eq:old sigma} replaced by the condition
$|\sigma|\ge c\e^2n/\|T\|^4$.

%Our goal here is to explain the proof of the following theorem, known as the restricted invertibility principle.
%\begin{theo}\label{thm:SS RI}
%\end{theo}

In~\cite{SS2} Spielman and Srivastava proved the following theorem:
\begin{theo}\label{thm:SS RI}
Suppose that $x_1,\ldots,x_m\in \R^n\setminus\{0\}$ satisfy
\begin{equation}\label{eq:RI decomposition}
\sum_{i=1}^m x_i\otimes x_i=I.
\end{equation}
Then for every linear $T:\R^n\to \R^n$ and $\e\in (0,1)$ there exists $\sigma\subseteq \{1,\ldots,m\}$ with
\begin{equation}\label{eq:RI thm quadratic}
|\sigma|\ge \left\lfloor\frac{\e^2\|T\|_{\HS}^2}{\|T\|^2}\right\rfloor,
\end{equation}
and such that for all $\{a_i\}_{i\in \sigma}\subseteq \R$ we have
\begin{equation}\label{eq:m}
\left\|\sum_{i\in \sigma} a_iTx_i\right\|_2^2\ge \frac{(1-\e)^2\|T\|_{\HS}^2}{m}\sum_{i\in \sigma}a_i^2.
\end{equation}
\end{theo}
Theorem~\ref{thm:SS RI} implies the Bourgain-Tzafriri restricted
invertibility principle. Indeed,  take $x_i=e_i$ and note that if
either $\|Te_i\|_2=1$ for all $i\in \{1,\ldots,n\}$ or $\langle
Te_i,e_i\rangle=1$ for all $i\in \{1,\ldots,n\}$ then
$\|T\|_{\HS}^2\ge n$. The idea to improve the Bourgain-Tzafriri
theorem in terms of Hilbert-Schmidt estimates is due to Vershynin,
who proved in~\cite{Ve} a statement similar to Theorem~\ref{thm:SS
RI} (with asymptotically worse dependence on $\e$). Among the tools
used in Vershynin's argument  is the Bourgain-Tzafriri restricted
invertibility theorem itself, but we will see how the iterative
approach of Section~\ref{sec:proof} yields a self-contained and
quite simple proof of Theorem~\ref{thm:SS RI}. This new approach of
Spielman-Srivastava has other advantages. Over the years, there was
interest~\cite{BT1,BT3,Tr1,CT} in improving the quantitative
estimates in Theorem~\ref{thm:BT1} (i.e., the bounds on $c, K$, and
the dependence $|\sigma|$ on $\e$ and $\|T\|$), and
Theorem~\ref{thm:SS RI} yields the best known bounds. Moreover, it
is not obvious that the subset $\sigma$ of Theorem~\ref{thm:BT1} can
be found in polynomial time. A randomized algorithm achieving this
was recently found by Tropp~\cite{Tr1}, and the work of
Spielman-Srivastava yields a determinstic algorithm which finds in
polynomial time a subset $\sigma$ satisfying the assertions of
Theorem~\ref{thm:SS RI}.

Before proceeding to an exposition of the proof of
Theorem~\ref{thm:SS RI} in~\cite{SS2}, we wish to note that another
important result of Bourgain-Tzafriri~\cite{BT1,BT2} is the
following theorem, which is easily seen to imply the second
assertion of Theorem~\ref{thm:BT1} with the
conclusion~\eqref{eq:1+eps} replaced by $\left\|\left(R_\sigma T
R_\sigma^*\right)^{-1}\right\|\le 1+\e$. This theorem is important
for certain applications, and it would be interesting if it could be
proved using the Spielman-Srivastava method as well.
\begin{theo}\label{thm:BT0}
There is a universal constant $c>0$ such that for every $\e>0$ and
$n\in \N$ if an operator $T:\R^n\to \R^n$ satisfies $\langle
Te_i,e_i\rangle=0$ for all $i\in \{1,\ldots,n\}$ then there exists a
subset $\sigma\subseteq \{1,\ldots, n\}$ satisfying $|\sigma|\ge
c\e^2 n$ and $\|R_\sigma TR_\sigma^*\|\le \e\|T\|.$
\end{theo}

\subsection{Proof of Theorem~\ref{thm:SS RI}}
The conclusion~\eqref{eq:m} of Theorem~\ref{thm:SS RI} is equivalent to the requirement that the matrix
\begin{equation}\label{eq:def A}
A=\sum_{i\in \sigma} (Tx_i)\otimes (Tx_i)
 \end{equation}
 has $|\sigma|$ eigenvalues at least $(1-\e)^2\|T\|_{\HS}^2/m$. Indeed, if $B$ is the $|\sigma|\times n$ matrix whose rows are $\{Tx_i\}_{i\in\sigma}$, then $A=B^*B$. The eigenvalues of $A$ are therefore the same as the eigenvalues of the $|\sigma|\times |\sigma|$ Gram matrix $BB^*=(\langle Tx_i,Tx_j\rangle)_{i,j\in \sigma}$. The assertion that all the eigenvalues of $BB^*$ are at least $(1-\e)^2\|T\|_{\HS}^2/m$ is identical to~\eqref{eq:m}.

 %We will therefore prove Theorem~\ref{thm:SS RI} by showing that there exists $\sigma\subseteq \{1,\ldots,m\}$ for which $\{Tx_i\}_{i\in \sigma}$ are linearly independent %and the smallest eigenvalue of the restriction of $A$ to $\mathrm{span}\left(\{Tx_i\}_{i\in \sigma}\right)$ is at least $(1-\e)^2\|T\|_{\HS}^2/m$.

Define
\begin{equation}\label{eq:def k}
k=\left\lfloor\frac{\e^2\|T\|_{\HS}^2}{\|T\|^2}\right\rfloor.
 \end{equation}
 We will construct inductively $y_0,y_1,\ldots,y_k\in \R^n$ with the following properties. We set $y_0=0$ and require that $y_1,\ldots,y_k\in \{x_1,\ldots,x_m\}$. Moreover, if for $i\in \{0,\ldots,k\}$ we write
\begin{equation}\label{eq:def b_i}
b_i=\frac{(1-\e)}{m}\left(\|T\|_{\HS}^2-\frac{i}{\e}\|T\|^2\right),
\end{equation}
then the  matrix
\begin{equation}\label{eq:def A_i}
A_i=\sum_{j=0}^i (Ty_j)\otimes (Ty_j)
\end{equation}
has  $k$ eigenvalues bigger than $b_i$ and all its other eigenvalues equal $0$ (this holds vacuously for $i=0$). Note that this requirement implies in particular that $y_1,\ldots,y_k$ are distinct. Finally, we require that for every $i\in \{1,\ldots,k\}$ we have
    \begin{equation}\label{eq:new barrier}
    \sum_{j=1}^m \left\langle \left(A_i-b_iI\right)^{-1}Tx_j,Tx_j\right\rangle
     <\sum_{j=1}^m \left\langle \left(A_{i-1}-b_{i-1}I\right)^{-1}Tx_j,Tx_j\right\rangle.
    \end{equation}
The matrix $A_k$ will then have the form~\eqref{eq:def A} with $|\sigma|=k$, and have  $k$ eigenvalues greater than $(1-\e)^2\|T\|_{\HS}^2/m$, as required. It remains therefore to show that for $i\in \{1,\ldots, k\}$ there exists a vector $y_i$ satisfying the desired properties, assuming that $y_0,y_1,\ldots,y_{i-1}$ have already been selected.

\begin{lemm}\label{lem:summed second}
Denote
\begin{equation}\label{eq:def mu}
\mu=\sum_{j=1}^m \left\langle \left(A_{i-1}-b_{i-1}I\right)^{-1}Tx_j,Tx_j\right\rangle-\sum_{j=1}^m \left\langle \left(A_{i-1}-b_iI\right)^{-1}Tx_j,Tx_j\right\rangle.
\end{equation}
(Since $b_i\in (0,b_{i-1})$, the matrix $(A_{i-1}-b_iI)^{-1}$ makes sense in~\eqref{eq:def mu}.) Then
\begin{multline}\label{eq:summed ineq second}
\sum_{j=1}^m \left\langle \left(A_{i-1}-b_{i}I\right)^{-1}TT^*\left(A_{i-1}-b_{i}I\right)^{-1}Tx_j,Tx_j\right\rangle\\< -\mu\sum_{j=1}^m \left(1+\left\langle \left(A_{i-1}-b_iI\right)^{-1}Tx_j,Tx_j\right\rangle\right).
\end{multline}
\end{lemm}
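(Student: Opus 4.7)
The plan is, as in the proof of Theorem~\ref{thm:psd version}, to convert the sums over $j\in\{1,\ldots,m\}$ into matrix traces by invoking the standing normalisation $\sum_{j=1}^m x_j\otimes x_j=I$. Writing $R=(A_{i-1}-b_{i-1}I)^{-1}$ and $Q=(A_{i-1}-b_iI)^{-1}$ and using that $\sum_j(Tx_j)\otimes(Tx_j)=T\bigl(\sum_j x_j\otimes x_j\bigr)T^*=TT^*$, the three sums appearing in the statement become
$$
\sum_{j=1}^m\bigl\langle QTT^*QTx_j,Tx_j\bigr\rangle=\tr\bigl((QTT^*)^2\bigr),\quad \sum_{j=1}^m\bigl\langle QTx_j,Tx_j\bigr\rangle=\tr(QTT^*),\quad \mu=\tr\bigl((R-Q)TT^*\bigr).
$$
The elementary resolvent identity $Q^{-1}-R^{-1}=(b_{i-1}-b_i)I$ gives $R-Q=(b_{i-1}-b_i)RQ$, so $\mu=(b_{i-1}-b_i)\tr(RQ\cdot TT^*)$. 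In the common eigenbasis of $A_{i-1}$ the $j$-th diagonal entry of $RQ$ is $1/[(\lambda_j-b_{i-1})(\lambda_j-b_i)]>0$ whether $\lambda_j=0$ or $\lambda_j>b_{i-1}$, so $RQ\succeq 0$ and hence $\mu\ge 0$.

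Next I would extract quantitative information from the inductive hypothesis \eqref{eq:new barrier}. Telescoping all the way down to $A_0=0$ yields $\tr(R\cdot TT^*)<\tr((A_0-b_0I)^{-1}TT^*)=-\|T\|_{\HS}^2/b_0=-m/(1-\e)$. Together with the identity $\tr(QTT^*)=\tr(RTT^*)-\mu$ this gives the crucial slack
$$
m+\tr(QTT^*)<-\frac{m\e}{1-\e}-\mu,
$$
so that the right-hand side of \eqref{eq:summed ineq second} satisfies $-\mu(m+\tr(QTT^*))>\mu^2+\mu m\e/(1-\e)>0$. The claim therefore reduces to the strict trace bound
$$
\tr\bigl((QTT^*)^2\bigr)<\mu^2+\mu\cdot\frac{m\e}{1-\e}.
$$

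The key remaining step is a trace Cauchy--Schwarz bound on $\tr((QTT^*)^2)$. The natural tool is the operator inequality $\tr((XY)^2)\le\tr(X^2Y^2)$ for symmetric $X,Y$, a consequence of the antisymmetry of the commutator $XY-YX$ (which forces $(XY-YX)^2\preceq 0$): taking $X=Q$ and $Y=TT^*$ and then using $(TT^*)^2\preceq\|T\|^2\,TT^*$ gives $\tr((QTT^*)^2)\le\|T\|^2\tr(Q^2\cdot TT^*)$. Since $b_{i-1}-b_i=(1-\e)\|T\|^2/(m\e)$, this factor $\|T\|^2$ is exactly what is needed to convert an upper estimate of $\tr(Q^2TT^*)$ in terms of $\tr(RQ\cdot TT^*)=\mu/(b_{i-1}-b_i)$ back into units of $\mu$ and $m\e/(1-\e)$. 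The main obstacle is making this conversion sharp: because $Q$ is indefinite -- with eigenvalues $1/(\lambda_j-b_i)>0$ on the range of $A_{i-1}$ and $-1/b_i<0$ on its $(n-i+1)$-dimensional kernel -- the naive eigenvalue comparison of $Q^2$ against $RQ$ loses a factor $b_{i-1}/b_i>1$ on the kernel, and this deficit must be absorbed by the extra $\mu$ summand in the lower bound for $-\mu(m+\tr(QTT^*))$ produced by the earlier steps of the induction. Carrying out this balancing carefully, and using the strictness already built into \eqref{eq:new barrier}, is what I expect will close the inequality.
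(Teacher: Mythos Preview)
Your reduction is correct and matches the paper's line exactly: after passing to traces via $\sum_j x_j\otimes x_j=I$, using the inductive hypothesis \eqref{eq:new barrier} telescoped to $A_0$, and bounding $\tr((QTT^*)^2)\le\|T\|^2\tr(Q^2TT^*)$, the task is precisely the paper's inequality~\eqref{eq:lem goal2},
\[
\|T\|^2\,\tr\!\left(T^*\left(A_{i-1}-b_iI\right)^{-2}T\right)\le \frac{\e m}{1-\e}\,\mu+\mu^2.
\]
You also correctly diagnose that on the range of $A_{i-1}$ one has $Q^2\preceq RQ$, so the first summand $\mu\cdot m\e/(1-\e)=\|T\|^2\tr(RQ\,TT^*)$ already controls the image part, and the only shortfall is on $\Ker(A_{i-1})$, where $Q^2=(b_{i-1}/b_i)RQ$.

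The genuine gap is the last sentence. What closes the inequality is \emph{not} further exploitation of the strictness in~\eqref{eq:new barrier}; that strictness has already been spent in your bound $m+\tr(QTT^*)<-\frac{m\e}{1-\e}-\mu$. The missing ingredient is an independent \emph{lower} bound on the kernel contribution $\|Q_{i-1}T\|_{\HS}^2=\tr(T^*Q_{i-1}T)$, where $Q_{i-1}$ is the orthogonal projection onto $\Ker(A_{i-1})$. In the paper this is obtained from the rank-one structure of the construction: since $A_r=A_{r-1}+(Ty_r)\otimes(Ty_r)$, one has $\dim\Ker(A_{r-1})-\dim\Ker(A_r)\le 1$, hence $\tr(T^*Q_rT)\ge \tr(T^*Q_{r-1}T)-\|T\|^2$, and inductively
\[
\|Q_{i-1}T\|_{\HS}^2\ge \|T\|_{\HS}^2-(i-1)\|T\|^2.
\]
This is exactly what is needed: writing out your ``balancing'', the kernel deficit is $\|T\|^2\cdot\frac{b_{i-1}-b_i}{b_{i-1}b_i^2}\|Q_{i-1}T\|_{\HS}^2$, while the available $\mu^2$ dominates $\bigl(\frac{b_{i-1}-b_i}{b_{i-1}b_i}\bigr)^2\|Q_{i-1}T\|_{\HS}^4$ (from the kernel part of $\mu$ alone); comparing these reduces to $\|Q_{i-1}T\|_{\HS}^2\ge \|T\|^2\,b_{i-1}/(b_{i-1}-b_i)=\e\|T\|_{\HS}^2-(i-1)\|T\|^2$, which is exactly the displayed bound. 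Without this rank argument the ``careful balancing'' you allude to cannot be carried out, because nothing in the resolvent/trace manipulations alone prevents $\|Q_{i-1}T\|_{\HS}^2$ from being too small.
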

Assuming the validity of Lemma~\ref{lem:summed second} for the moment, we will show how to complete the inductive construction. By~\eqref{eq:summed ineq second} there exists $j\in \{1,\ldots, m\}$ satisfying
\begin{multline}\label{eq:j choice}
\left\langle \left(A_{i-1}-b_{i}I\right)^{-1}TT^*\left(A_{i-1}-b_{i}I\right)^{-1}Tx_j,Tx_j\right\rangle\\<-\mu\left(1+\left\langle \left(A_{i-1}-b_iI\right)^{-1}Tx_j,Tx_j\right\rangle\right).
\end{multline}
Our inductive choice will be $y_i=x_j$.

The matrix $\left(A_{i-1}-b_{i-1}I\right)^{-1}-\left(A_{i-1}-b_iI\right)^{-1}$ is positive definite, since by the inductive hypothesis its eigenvalues are all of the form $\left(\lambda-b_{i-1}\right)^{-1}-\left(\lambda-b_i\right)^{-1}$ for some $\lambda\in \R$ that satisfies $\lambda>b_{i-1}>b_i$ or $\lambda=0$ (and since for $i\le k$ we have $b_i>0$). Hence $\mu\ge 0$. Since the left hand side of~\eqref{eq:j choice} is nonnegative, it follows that
\begin{equation}\label{eq:1+ is neg}
1+\left\langle \left(A_{i-1}-b_iI\right)^{-1}Tx_j,Tx_j\right\rangle<0.
\end{equation}
Since $A_i=A_{i-1}+ (Tx_j)\otimes (Tx_j)$, it follows from~\eqref{eq:trace} that
\begin{equation}\label{eq:diff neg}
\trace\left(\left(A_{i}-b_iI\right)^{-1}\right)-\trace\left(\left(A_{i-1}-b_iI\right)^{-1}\right)=
-\frac{\left\langle \left(A_{i-1}-b_iI\right)^{-2}Tx_j,Tx_j\right\rangle}{1+\left\langle \left(A_{i-1}-b_iI\right)^{-1}Tx_j,Tx_j\right\rangle}\stackrel{\eqref{eq:1+ is neg}}{>}0,
\end{equation}
where in the last inequality of~\eqref{eq:diff neg} we used the fact that $\left(A_{i-1}-b_iI\right)^{-2}$ is positive definite. At the same time, by the inductive hypothesis $\lambda_1(A_{i-1}),\ldots,\lambda_{i-1}(A_{i-1})>b_{i-1}$, and $\lambda_{i}(A_{i-1})=\cdots=\lambda_n(A_{i-1})=0$. Since $A_i-A_{i-1}$ is a rank one positive semidefinite matrix, the eigenvalues of $A_i$ and $A_{i-1}$ interlace (see~\cite[Section~III.2]{Bah}; this result goes back to~\cite{Wey}), and therefore
\begin{equation}\label{eq:interlace1}
\lambda_1(A_i)\ge \lambda_1(A_{i-1})\ge\lambda_2(A_i)\ge \lambda_2(A_{i-1})\ge \cdots \ge \lambda_{i-1}(A_{i-1})\ge \lambda_{i}(A_i),
\end{equation}
and
\begin{equation}\label{eq:interlace2}
\lambda_{i}(A_{i-1})=\cdots=\lambda_n(A_{i-1})=\lambda_{i+1}(A_i)=\cdots=\lambda_n(A_{i})=0.
\end{equation}
Hence,
\begin{multline*}
0\stackrel{\eqref{eq:diff neg}}{<}\trace\left(\left(A_{i}-b_iI\right)^{-1}\right)-\trace\left(\left(A_{i-1}-b_iI\right)^{-1}\right)\\\stackrel{\eqref{eq:interlace2}}{=}
\frac{1}{\lambda_i(A_{i})-b_i}+\frac{1}{b_i}+\sum_{j=1}^{i-1}\left(\frac{1}{\lambda_j(A_i)-b_i}-\frac{1}{\lambda_j(A_{i-1})-b_i}\right)\stackrel{\eqref{eq:interlace1}}{\le}
\frac{\lambda_i(A_{i})}{b_i(\lambda_i(A_{i})-b_i)},
\end{multline*}
implying that $\lambda_i(A_i)>b_i$.

Therefore, in order to establish the inductive step, it remains to prove~\eqref{eq:new barrier}. To this end, note that due to~\eqref{eq:RI decomposition} and~\eqref{eq:trace identity} for every $n\times n$  matrix $A$ we have
\begin{equation}\label{eq:new trace identity}
\sum_{j=1}^m \left\langle ATx_j,Tx_j\right\rangle=\trace\left(T^*AT\right).
\end{equation}
Hence~\eqref{eq:new barrier} is equivalent to the inequality
\begin{equation}\label{eq:trace barrier}
\trace\left(T^*\left(A_{i-1}-b_{i-1}I\right)^{-1}T\right)>\trace\left(T^*\left(A_{i}-b_{i}I\right)^{-1}T\right).
\end{equation}
Now,
\begin{eqnarray*}
&&\!\!\!\!\!\!\!\!\!\!\!\!\!\!\!\!\!\!\!\!\!\!\!\!\!\!\!\!\!\!\!\!\!\!\!\!\!\!\!
\tr\left(T^*\left(A_{i}-b_{i}I\right)^{-1}T\right)-\tr\left(T^*\left(A_{i-1}-b_{i}I\right)^{-1}T\right)\\
&\stackrel{\eqref{eq:SM}}{=}&-
\frac{\tr\left(T^*\left(A_{i-1}-b_iI\right)^{-1}((Tx_j)\otimes (Tx_j))\left(A_{i-1}-b_iI\right)^{-1}T\right)}{1+\left\langle \left(A_{i-1}-b_iI\right)^{-1}Tx_j,Tx_j\right\rangle}\\
&=& -\frac{\left\langle \left(A_{i-1}-b_{i}I\right)^{-1}TT^*\left(A_{i-1}-b_{i}I\right)^{-1}Tx_j,Tx_j\right\rangle}{1+\left\langle \left(A_{i-1}-b_iI\right)^{-1}Tx_j,Tx_j\right\rangle}\\
&\stackrel{\eqref{eq:1+ is neg}\wedge\eqref{eq:j choice}}{<}&\mu
\stackrel{\eqref{eq:def mu}\wedge\eqref{eq:new trace identity}}{=} \tr\left(T^*\left(A_{i-1}-b_{i-1}I\right)^{-1}T\right)-\tr\left(T^*\left(A_{i-1}-b_{i}I\right)^{-1}T\right).
\end{eqnarray*}
This proves~\eqref{eq:trace barrier}, so all that remains in order to prove Theorem~\ref{thm:SS RI} is to prove Lemma~\ref{lem:summed second}.

\begin{proof}[Proof of Lemma~\ref{lem:summed second}] Using~\eqref{eq:new trace identity} we see that our goal~\eqref{eq:summed ineq second} is equivalent to the following inequality
\begin{multline}\label{eq:lem goal1}
\tr\left(T^*\left(A_{i-1}-b_{i}I\right)^{-1}TT^*\left(A_{i-1}-b_{i}I\right)^{-1}T\right)
\\< -\mu\left(m+\tr\left(T^*\left(A_{i-1}-b_{i}I\right)^{-1}T\right)\right).
\end{multline}
Note that
\begin{multline}\label{eq:take norm out}
\trace\left(T^*\left(A_{i-1}-b_{i}I\right)^{-1}TT^*\left(A_{i-1}-b_{i}I\right)^{-1}T\right)\\ \le \|T\|^2\trace\left(\left(A_{i-1}-b_{i}I\right)^{-1}TT^*\left(A_{i-1}-b_{i}I\right)^{-1}\right)=\|T\|^2\trace\left(T^*\left(A_{i-1}-b_{i}I\right)^{-2}T\right).
\end{multline}
The inductive hypothesis~\eqref{eq:new barrier}, or its equivalent form~\eqref{eq:trace barrier}, implies that
\begin{multline}\label{eq:before mu}
\trace\left(T^*\left(A_{i-1}-b_{i-1}I\right)^{-1}T\right)<\trace\left(T^*\left(A_{0}-b_{0}I\right)^{-1}T\right)
\\=-\frac{1}{b_0}\trace(T^*T)
=\frac{\|T\|_{\HS}^2}{b_0}\stackrel{\eqref{eq:def b_i}}{=}-\frac{m}{1-\e}.
\end{multline}
Hence,
\begin{equation}\label{eq:-mu bound}
\trace\left(T^*\left(A_{i-1}-b_{i}I\right)^{-1}T\right)\stackrel{\eqref{eq:def mu}\wedge\eqref{eq:new trace identity}\wedge\eqref{eq:before mu}}{<}-\frac{m}{1-\e}-\mu.
\end{equation}
From~\eqref{eq:take norm out} and~\eqref{eq:-mu bound} we see that in order to prove~\eqref{eq:lem goal1} it suffices to establish the following inequality:
\begin{equation}\label{eq:lem goal2}
\|T\|^2\trace\left(T^*\left(A_{i-1}-b_{i}I\right)^{-2}T\right)\le \frac{\e m}{1-\e}\mu+\mu^2.
\end{equation}

To prove~\eqref{eq:lem goal2} we first make some preparatory remarks. For $r\in \{0,\ldots,i-1\}$ let $P_r$ be the orthogonal projection on the image of $A_r$ and let $Q_r=I-P_r$ be the orthogonal projection on the kernel of $A_r$. Since $A_0=0$ we have $Q_0=I$. Moreover, because  $A_r=A_{r-1}+(Ty_r)\otimes (Ty_r)$ and $A_{r-1}, (Ty_r)\otimes (Ty_r)$ are both positive semidefinite, it follows that $\Ker(A_r)=\Ker(A_{r-1})\cap (Tx_r)^{\perp}$. Therefore
\begin{equation}\label{eq:trace recursion}
\tr(Q_{r-1}-Q_r)=\dim(\Ker(A_{r-1}))-\dim(\Ker(A_r))\le 1.
\end{equation}
Hence,
\begin{multline}\label{eq:HS recurse}
\|Q_rT\|_{\HS}^2=\trace\left(T^*Q_rT\right)=\|Q_{r-1}T\|_{\HS}^2-\tr\left(T^*(Q_{r-1}-Q_r)T\right)\\\ge
\|Q_{r-1}T\|_{\HS}^2-\|T\|^2\tr(Q_{r-1}-Q_r)\stackrel{\eqref{eq:trace recursion}}{\ge} \|Q_{r-1}T\|_{\HS}^2-\|T\|^2.
\end{multline}
Since $Q_0=I$, \eqref{eq:HS recurse} yields by induction the following useful bound:
\begin{equation}\label{eq:useful}
\|Q_{i-1}T\|_{\HS}^2\ge \|T\|_{\HS}^2-(i-1)\|T\|^2.
\end{equation}

Next, since the nonzero eigenvalues of $A_{i-1}$ are greater than $b_{i-1}$, the matrix $T^*P_{i-1}\left((A_{i-1}-b_{i-1}I)\left(A_{i-1}-b_iI\right)^{-2}\right)P_{i-1}T$ is positive semidefinite. In particular, its trace is nonnegative, yielding the following estimate:
\begin{multline*}
0\le \trace\left(T^*P_{i-1}\left((A_{i-1}-b_{i-1}I)\left(A_{i-1}-b_iI\right)^{-2}\right)P_{i-1}T\right)\\
= \trace\left(T^*P_{i-1}\left(\frac{(A_{i-1}-b_{i-1}I)^{-1}-(A_i-b_iI)^{-1}}{(b_{i-1}-b_i)^2}-\frac{(A_{i-1}-b_iI)^{-2}}{b_{i-1}-b_i}\right)P_{i-1}T\right),
\end{multline*}
which rearranges to the following inequality:
\begin{multline}\label{eq:incrementP}
(b_{i-1}-b_i)\trace\left(T^*P_{i-1}\left(A_{i-1}-b_iI\right)^{-2}P_{i-1}T\right)\\
\le \trace\left(T^*P_{i-1}\left(A_{i-1}-b_{i-1}I\right)^{-1}P_{i-1}T\right)-\trace\left(T^*P_{i-1}\left(A_{i-1}-b_{i}I\right)^{-1}P_{i-1}T\right).
\end{multline}
Since $Q_{i-1}(A_{i-1}-b_{i-1}I)^{-1}Q_{i-1}=-\frac{1}{b_{i-1}}Q_{i-1}$ and $Q_{i-1}(A_{i-1}-b_{i}I)^{-1}Q_{i-1}=-\frac{1}{b_{i}}Q_{i-1}$,
\begin{eqnarray}\label{eq:mu lower}
\mu&=&\nonumber\trace\left(T^*(P_{i-1}+Q_{i-1})\left(A_{i-1}-b_{i-1}I\right)^{-1}(P_{i-1}+Q_{i-1})T\right)\\\nonumber&&
-\trace\left(T^*(P_{i-1}+Q_{i-1})\left(A_{i-1}-b_{i}I\right)^{-1}(P_{i-1}+Q_{i-1})T\right)
\\&=&\nonumber\trace\left(T^*P_{i-1}\left(A_{i-1}-b_{i-1}I\right)^{-1}P_{i-1}T\right)-\trace\left(T^*P_{i-1}\left(A_{i-1}-b_{i}I\right)^{-1}P_{i-1}T\right)\\
&&+\nonumber \left(\frac{1}{b_i}-\frac{1}{b_{i-1}}\right)\trace\left(T^*Q_{i-1}T\right)\\
&\stackrel{\eqref{eq:incrementP}}{\ge} & (b_{i-1}-b_i)\trace\left(T^*P_{i-1}\left(A_{i-1}-b_iI\right)^{-2}P_{i-1}T\right)+\frac{b_{i-1}-b_i}{b_{i-1}b_i}\|Q_{i-1}T\|_{\HS}^2.
\end{eqnarray}
Also $Q_{i-1}(A_{i-1}-b_{i}I)^{-2}Q_{i-1}=\frac{1}{b_{i}^2}Q_{i-1}$, and therefore
\begin{eqnarray*}
\trace\left(T^*\left(A_{i-1}-b_{i}I\right)^{-2}T\right)&=&\trace\left(T^*P_{i-1}\left(A_{i-1}-b_iI\right)^{-2}P_{i-1}T\right)+\frac{\trace(T^*Q_{i-1}T)}{b_i^2}\\
&=&\trace\left(T^*P_{i-1}\left(A_{i-1}-b_iI\right)^{-2}P_{i-1}T\right)+\frac{1}{b_i^2}\|Q_{i-1}T\|_{\HS}^2\\
&\stackrel{\eqref{eq:mu lower}}{\le}& \frac{\mu}{b_{i-1}-b_i}+\frac{\|Q_{i-1}T\|_{\HS}^2}{b_i}\left(\frac{1}{b_i}-\frac{1}{b_{i-1}}\right)\\
&\stackrel{\eqref{eq:def b_i}}{=}&\frac{\e m\mu}{(1-\e)\|T\|^2}+\frac{\|Q_{i-1}T\|_{\HS}^2}{b_i}\left(\frac{1}{b_i}-\frac{1}{b_{i-1}}\right).
\end{eqnarray*}
It follows that in order to prove the desired inequality~\eqref{eq:lem goal2}, it suffices to show that the following inequality holds true:
\begin{equation}\label{eq:goal 3}
\|T\|^2\frac{\|Q_{i-1}T\|_{\HS}^2}{b_i}\left(\frac{1}{b_i}-\frac{1}{b_{i-1}}\right)\le \mu^2.
\end{equation}
Since $b_{i-1}>b_i$ and $T^*P_{i-1}\left(A_{i-1}-b_iI\right)^{-2}P_{i-1}T$ is positive semidefinite, a consequence of~\eqref{eq:mu lower} is that $\mu\ge \|Q_{i-1}T\|_{\HS}^2\left(\frac{1}{b_i}-\frac{1}{b_{i-1}}\right)$. Hence, in order to prove~\eqref{eq:goal 3} it suffices to show that
$$
\|T\|^2\frac{\|Q_{i-1}T\|_{\HS}^2}{b_i}\left(\frac{1}{b_i}-\frac{1}{b_{i-1}}\right)\le \|Q_{i-1}T\|_{\HS}^4\left(\frac{1}{b_i}-\frac{1}{b_{i-1}}\right)^2,
$$
or equivalently,
$$
\|Q_{i-1}T\|_{\HS}^2\ge \|T\|^2\frac{b_{i-1}}{b_{i-1}-b_i}\stackrel{\eqref{eq:def b_i}}{=}\e\|T\|_{\HS}^2-(i-1)\|T\|^2,
$$
which is a consequence of inequality~\eqref{eq:useful}, that we proved earlier.
\end{proof}
%\section{The Kadison-Singer problem}

\section{Nonlinear notions of sparsification}\label{sec:other}

Quadratic forms such as $\sum_{i=1}^n\sum_{j=1}^n g_{ij}(x_i-x_j)^2$ are expressed in terms of the mutual distances between the points $\{x_1,\ldots,x_n\}\subseteq \R$. This feature makes them very useful for a variety of applications in metric geometry, where the Euclidean distance is replaced by other geometries. We refer to~\cite{MN1,NS} for a (partial) discussion of such issues. It would be useful to study the sparsification problem of Theorem~\ref{thm:BSS} in the non-Euclidean setting as well, although the spectral arguments used by Batson-Spielman-Srivastava seem inadequate for addressing such nonlinear questions.

In greatest generality one might consider an abstract set $X$, and a symmetric function (kernel) $K:X\times X\to [0,\infty)$. Given an $n\times n$ matrix $G=(g_{ij})$, the goal would be to find a sparse $n\times n$ matrix $H=(h_{ij})$ satisfying
\begin{equation}\label{eq:kernel}
\sum_{i=1}^n\sum_{j=1}^n g_{ij} K(x_i,x_j)\le \sum_{i=1}^n\sum_{j=1}^n h_{ij} K(x_i,x_j)\le C\sum_{i=1}^n\sum_{j=1}^n g_{ij} K(x_i,x_j),
\end{equation}
for some constant $C>0$ and all $x_1,\ldots,x_n\in X$.

Cases of geometric interest in~\eqref{eq:kernel} are when $K(x,y)=d(x,y)^p$, where $d(\cdot,\cdot)$ is a metric on $X$ and $p>0$. When $p\neq 2$ even the case of the real line with the standard metric is unclear. Say that an $n\times n$ matrix $H=(h_{ij})$ is a $p$-sparsifier with quality $C$ of an $n\times n$ matrix $G=(g_{ij})$ if $\supp(H)\subseteq \supp(G)$ and there exists a scaling factor $\lambda>0$ such that for every $x_1,\ldots,x_n\in \R$ we have
\begin{equation}\label{eq:power p}
\lambda\sum_{i=1}^n\sum_{j=1}^n g_{ij} |x_i-x_j|^p\le \sum_{i=1}^n\sum_{j=1}^n h_{ij} |x_i-x_j|^p\le
C\lambda\sum_{i=1}^n\sum_{j=1}^n g_{ij} |x_i-x_j|^p.
\end{equation}
By integrating~\eqref{eq:power p} we see that it is equivalent to the requirement that for every $f_1,\ldots,f_n\in L_p$ we have
\begin{equation}\label{eq:L-p version}
\lambda\sum_{i=1}^n\sum_{j=1}^n g_{ij} \|f_i-f_j\|_p^p\le \sum_{i=1}^n\sum_{j=1}^n h_{ij} \|f_i-f_j\|_p^p\le
C\lambda\sum_{i=1}^n\sum_{j=1}^n g_{ij} \|f_i-f_j\|_p^p.
\end{equation}
By a classical theorem of Schoenberg~\cite{Scho} (see also~\cite{WW}), if $q\le p$ then the metric space $(\R,|x-y|^{q/p})$ admits an isometric embedding into $L_2$, which in turn is isometric to a subspace of $L_p$. It therefore follows from~\eqref{eq:L-p version} that if $H$ is a $p$-sparsifier of $G$ with quality $C$ then it is also a $q$-sparsifier of $G$ with quality $C$ for every $q\le p$. In particular, when $p\in (0,2)$, Theorem~\ref{thm:BSS} implies that for every $G$ a $p$-sparsifier $H$ of quality $1+\e$ always exists with $|\supp(H)|=O(n/\e^2)$.

When $p>2$ it is open whether every matrix $G$ has a good $p$-sparsifier $H$. By ``good" we mean that the quality of the sparsifier $H$ is small, and that $|\supp(H)|$ is small. In particular, we ask whether every matrix $G$ admits a $p$-sparisfiers $H$ with quality $O_p(1)$ (maybe even $1+\e$) and $|\supp(H)|$ growing linearly with $n$.

It was shown to us by Bo'az Klartag that if $G=(g_ig_j)$ is a product matrix with nonnegative entries then Matou\v{s}ek's extrapolation argument for Poincar\'e inequalities~\cite{Ma}  (see also~\cite[Lemma~4.4]{NS}) can be used to show that if $q>p$ and  $H$ is a $p$-sparsifier of $G$ with quality $C$, then $H$ is also a $q$-sparsifier of $G$ with quality $C'(C,p,q)$. However, we shall now present a simple example showing that a $p$-sparsifier of $G$ need not be a $q$-sparsifier of $G$ with quality independent of $n$ for any $q>p$, for some matrix $G$ (which is, of course, not a product matrix). This raises the question whether or not the method of Batson-Spielman-Srivastava, i.e., Theorem~\ref{thm:BSS}, produces a matrix $H$ which is a $O(1)$-quality $p$-sparsifier of $G$ for some $p>2$.

Fix $q>p$, $\e>0$ and $n\in \N$. Let $G=(g_{ij})$ be the $n\times n$ adjacency matrix of the weighted $n$-cycle, where one edge has weight $1$, and all remaining edges have weight $(n-1)^{p-1}/\e$, i.e., $g_{1n}=g_{n1}=1$,
$$
g_{12}=g_{21}=g_{23}=g_{32}=\cdots=g_{n-1,n}=g_{n,n-1}=\frac{(n-1)^{p-1}}{\e},
 $$
 and all the other entries of $G$ vanish. Let $H=(h_{ij})$ be the adjacency matrix of the same weighted graph, with the edge $\{1,n\}$ deleted, i.e., $h_{1n}=h_{n1}=0$ and all the other entries of $H$ coincide with the entries of $G$. It is immediate from the definition that $\sum_{i=1}^n\sum_{j=1}^n g_{ij}|x_i-x_j|^p\ge \sum_{i=1}^n\sum_{j=1}^n h_{ij}|x_i-x_j|^p$ for all $x_1,\ldots,x_n\in \R$. The reverse inequality is proved as follows:
\begin{eqnarray*}
\sum_{i=1}^n\sum_{j=1}^n g_{ij}|x_i-x_j|^p&=&2|x_1-x_n|^p+\frac{2(n-1)^{p-1}}{\e}\sum_{i=1}^{n-1}|x_i-x_{i+1}|^p\\
&\le&
2\left(\sum_{i=1}^{n-1}|x_i-x_{i+1}|\right)^p+\frac{2(n-1)^{p-1}}{\e}
\sum_{i=1}^{n-1}|x_i-x_{i+1}|^p\\
&\le & (1+\e)\frac{2(n-1)^{p-1}}{\e}
\sum_{i=1}^{n-1}|x_i-x_{i+1}|^p\\
&=& (1+\e)\sum_{i=1}^n\sum_{j=1}^n h_{ij}|x_i-x_j|^p.
\end{eqnarray*}
Hence $H$ is a $p$-sparsifier of $G$ with quality $1+\e$.

For the points $x_i=i$ we have
$
\sum_{i=1}^n\sum_{j=1}^n g_{ij}|x_i-x_j|^q=2(n-1)^q+2(n-1)^{p}/\e,
$
and
$
\sum_{i=1}^n\sum_{j=1}^n h_{ij}|x_i-x_j|^q=2(n-1)^{p}/\e.
$
At the same time, if $y_2=1$ and $y_i=0$ for all $i\in \{1,\ldots,n\}\setminus \{2\}$, we have
$\sum_{i=1}^n\sum_{j=1}^n g_{ij}|y_i-y_j|^q=\sum_{i=1}^n\sum_{j=1}^n h_{ij}|y_i-y_j|^q>0$. Thus, the quality of $H$ as a $q$-sparsifier of $G$ is at least $\e(n-1)^{q-p}$, which tends to $\infty$ with $n$, since $q>p$.

%Acknowledgements Acknowledgments
\bigskip

%\noindent{\bf Acknowledgements.}

%\subsection*{Acknowledgments}
\noindent{\bf Acknowledgments.} This paper is a survey of recent
results of various authors, most notably
Batson-Spielman-Srivastava~\cite{BSS},
Spielman-Srivastava~\cite{SS1,SS2}, Srivastava~\cite{Sr1,Sr2},
Newman-Rabinovich~\cite{NR} and Schechtman~\cite{Sc3}. Any
differences between the presentation here and the results being
surveyed are only cosmetic. I am very grateful to Alexandr Andoni,
Tim Austin, Keith Ball, Bo'az Klartag, Ofer Neiman and especially
Vincent Lafforgue, Gilles Pisier, Gideon Schechtman and Nikhil
Srivastava, for helpful discussions and suggestions.

%\begin{equation}
%\begin{pmatrix} 0 & 1 &0 & \dots& \dots&0& n^{p-1} \\
%  1 & 0 &n^{p-1}&0 & \dots& \dots & 0\\
%  0 & n^{p-1} & 0  & n^{p-1}&\ddots&\ddots & \vdots\\
%            \vdots & \ddots & \ddots& \ddots&\ddots&\ddots &\vdots\\
%              0 & \dots & \dots&0&n^{p-1} &0&n^{p-1}\\
%              0&0&\dots&\dots&0&n^{p-1}&0
%                       \end{pmatrix},\end{equation}

\end{document}